\newcommand{\bea}{\begin{eqnarray}}
\newcommand{\eea}{\end{eqnarray}}
\newcommand{\bee}{\begin{eqnarray*}}
\newcommand{\eee}{\end{eqnarray*}}
\newcommand{\nn}{\nonumber}
\newcommand{\lb}{\label}
\newcommand{\la}{\lambda}
\newtheorem{thm}{Theorem}[section]
\newtheorem{lem}[thm]{Lemma}
\newtheorem{remark}{Remark}
\begin{document}

\begin{frontmatter}

\title{On the joint distribution of an infinite-buffer discrete-time batch-size-dependent service queue with single and multiple vacation}

\author{N. Nandy}
\ead{nilanjannanday2@gmail.com}
\author{S. Pradhan\corref{mycorrespondingauthor}}
\ead[url]{spiitkgp11@gmail.com}
\cortext[mycorrespondingauthor]{Corresponding author}
\ead{spiitkgp11@gmail.com}
\address{Department of Mathematics, Visvesvaraya National Institute of Technology, Nagpur-440010, India}
\begin{abstract}
Due to the widespread applicability of discrete-time queues in wireless networks or telecommunication systems, this paper analyzes an infinite-buffer batch-service queue with single and multiple vacation where customers/messages arrive according to the Bernoulii process and service time varies with
the batch-size. The foremost focal point of this analysis is to get the complete joint distribution of queue length and server content at
service completion epoch, for which first the bivariate probability generating function has been derived. We also acquire the joint distribution at
arbitrary slot. We also provide several marginal distributions and performance measures for the utilization of the vendor. Transmission of data through a particular channel is skipped due to the high transmission error. As the discrete phase type distribution plays a noteworthy role to control this error, we
include numerical example where service time distribution follows discrete phase type distribution. A comparison between batch-size dependent and independent service has been drawn through the graphical representation of some performance measures and total system cost.
\end{abstract}

\begin{keyword}
Discrete-time, batch-service, queue, batch-size-dependent, joint distribution.
\MSC[2010] 60G05\sep  60K25
\end{keyword}
\end{frontmatter}
\section{Introduction}
In recent years, Broadband Integrated Services Digital Network (B-ISDN) based on Asynchronous Transfer Mode (ATM) technology, IEEE802.11 n WLANs, circuit switched time-division multiple access (TDMA) systems,  etc. have received abound attention of the researchers mainly due to its compatibility of providing a common interface for multimedia services. The underlying mechanism of those systems is completely based on packet switching principle in which the transmission of packets are allowed only at regularly spaced points in time generally called as slot. Consequently, those systems can be adequately modeled and analyzed using discrete-time queues which have notable diverse spectrum of applications in modern telecommunication/wireless systems, see e.g. Bruneel and Kim \cite{bruneel1993discrete}, Alfa \cite{alfa2010queueing}, Samanta et al. \cite{samanta2007discrete,samanta2007analyzing,samanta2020waiting}, Gupta et al. \cite{gupta2007discrete,gupta2014analysis}, Claeys et al. \cite{claeys2010complete} and references therein.\\
\hspace*{0.3cm}Batch-service queues are ubiquitous in several practical situations such as automatic manufacturing technology (very large-scale integrated (VLSI) circuits), blood pooling, ovens in manufacturing system, mobile crowdsourcing app for smart cities, recreational devices in amusement park etc. A tremendous research is rapidly growing on batch-service queues for the betterment of human civilization, see e.g. Chaudhry and Gupta \cite{chaudhry2003analysis}, Chang et al. \cite{ho2004performance}, Goswami et al. \cite{goswami2006analyzing,goswami2006discrete}, Germs and Foreest \cite{germs2013analysis}, Barbhuiya and Gupta \cite{barbhuiya2019discrete}, Bank and Samanta \cite{bank2020analytical} and references therein. In recent times, a few researchers have focused on batch-service queues with batch-size-dependent service due to their usefulness in production and transportation, package delivery, group testing of blood/urine samples, etc. For more detail see Pradhan et al. \cite{pradhan2015analyzing,pradhan2015queue,pradhan2018IJOR}, Claeys et al. \cite{claeys2010queueing,claeys2011analysis}. \\
\hspace*{0.3cm} For the discrete-time batch-service $Geo^X/G_n^{(l,c)}/1$ and D-BMAP$/G^{(l,c)}_r/1$ queue,
Claeys et al.  \cite{claeys2011analysis,claeys2013analysis}  derived joint probability/vector generating function (pgf/vgf) of queue and server content at
arbitrary slot. Claeys et al. \cite{claeys2013tail} also focused on the analysis of tail probabilities for the customer delay. They also extended their investigation on the influence of correlation of the arrival process on the behavior of the system. However, it should be noted here that the complete extraction procedure of the joint distribution was lacking.  Banerjee et al. \cite{banerjee2014analysis} provided the complete joint distribution of queue and server content for a discrete-time finite-buffer $Geo/G_r^{(a,b)}/1/N$ queue.
Yu and Alfa \cite{yu2015algorithm} analyzed D-MAP$/G_r^{(1,a,b)}/1/N$ queue in which they reported the queue length and server content distribution together using both embedded Markov chain technique (EMCT) and quasi birth and death (QBD) process. These analysis does not focus on any type of vacation.\\
\hspace*{0.3cm}\hspace*{0.3cm}In several real-life circumstances, it is observed that after the service completion of a batch, the server may be unavailable
for a random period of time if the server finds less number of customers than the minimum threshold. The server may be utilize this time in order to carry out some additional work. These type of queues are known as vacation queues which have potential applications in polling protocols frequently used in high-speed telecommunication networks, designing of local area networks, processor schedules in computer and switching systems, shared resources maintenance, manufacturing system with server breakdown etc. The literature clearly exhibits the extensive studies on vacation queues, for example see Doshi \cite{doshi1986queueing}, Lee et al. \cite{lee1994analysis}, Chang et al. \cite{ho2004performance}, Sikdar et al. \cite{sikdar2005analytic,sikdar2005queue}, Gupta et al. \cite{gupta2004finite,gupta2005finite,gupta2006computing}, Reddy et al. \cite{reddy1998analysis,reddy1999non}, Madan et al. \cite{madan2003steady}, Banik et al. \cite{banik2006finite,banik2007gi}, Arumuganathan and Jeyakumar \cite{arumuganathan2004analysis,arumuganathan2005steady}, Ke Jau-Chuan \cite{ke2007batch}, Jeyakumar and Arumuganathan \cite{jeyakumar2011non,jeyakumar2012study} and the references therein. It should be noted here that none of these analysis provides the server content distribution which is an essential tool to increase the serving capability of the server. Moreover no one focuses on batch-size-dependent service policy. In recent times, Gupta et al. \cite{gupta2019finite} have focused on the joint distribution for a finite-buffer batch-size-dependent service queue with single and multiple vacations where vacation time depends on queue length in continuous-time set-up. \\
\hspace*{0.3cm}However to the best of authors' knowledge, the joint queue and server content distribution and performance measures for an infinite-buffer discrete-time batch-size-dependent batch-service queue with single and multiple vacation is not available so far in the literature. From the designing and applicability perspective, one of the major concern is to achieve the most cost-effective combination of performance or reliability of the system
output so that the server utilization and processing rate can be met together. The sever content distribution plays a noteworthy role to calculate the average number of customers/packets with the server by which server's capability can be maximized. With this information one can focus on the optimal control of the concerned queue at pre-implementation stage so that total system cost can be reduced. Moreover, batch-size-dependent service mechanism is a powerful tool to reduce the congestion arising in digital communication systems and to avoid the excessive delays of the transmission of packets.\\
\hspace*{0.3cm}Getting motivated from the above perspective, in this paper, we analyze an infinite-buffer discrete-time queue with the Bernoulli
arrival process, general bulk service ($a,b$) rule, single and multiple vacations and batch-size-dependent service policy where our center of focus is on joint
queue and server content distribution. Practical application of this system and detailed model description are depicted in the subsequent sections. It is worthwhile to mention here that the inclusion of single and multiple vacation together along with batch-size-dependent service makes the mathematical analysis much more complex. Firstly, the steady-state governing equations of the concerned model have been developed. Secondly, we move towards our foremost focal point for the derivation of the bivariate pgf of queue and server content distribution together at service completion epoch. Employing the partial fraction technique, the joint distribution have been successfully extracted and presented in a quite simple and elegant form. The probabilities at arbitrary slot are acquired using the service completion/post transmission epoch probabilities. In order to bring out the essence of this system to the system designer/vendor, several marginal distributions along with pivotal performance measures are also provided. At the endmost, some assorted numerical examples and graphical observations are included to show the usefulness of the analytic procedure which can be fruitful to the readers. In one example, the service time follows the discrete phase type distribution as it significantly monitor the transmission error occurring in telecommunication system. We sketch a comparison between batch-size dependent and independent models through some graphical observations.\\
\hspace*{0.3cm} As a counterpart of discrete-time, in continuous-time set-up, Banerjee et al. \cite{banerjee2012reducing,banerjee2013analysis,banerjee2015analysis}, Pradhan et al. \cite{pradhan2015PEVA,pradhan2019analysis,pradhan2020distribution} have analyzed some finite- and infinite- buffer batch-size-dependent batch-service queues wherein
they provided complete joint distribution of queue and server content at different epochs employing EMCT as well as supplementary variable technique.\\
\hspace*{0.3cm}The rest portion of this paper is organized as follows: next section provides a practical application of the concerned model while Section \ref{MD} describes the model in detail. In Section \ref{GE}, system equations are derived in the steady state and Section \ref{DE} provides the complete procedure for getting joint distribution of queue and server content at service completion epoch. A relationship between the probabilities at service completion and arbitrary epoch is established in Section \ref{AE}. Marginal distributions along with performance measures are sketched in the subsequent section. Several numerical results are appended in Section \ref{NR} followed by the final conclusion.
\section{Practical application of the suggested queueing system}
The above queueing system can be suitably used to model several practical scenarios. One such example is sketched here. The messages, data,
images, videos, signals are first broken into manageable information packets. In ATM multiplexing and switching technology, IEEE802.11 n WLANs, internet
protocol or ethernet etc., those packets are transmitted during a fixed slot as a single entity through a common interface with a minimum threshold and
maximum limit which may be thought as general bulk service ($a,b$) rule. The main advantage is the efficiency and improvement of the quality of service as the construction of only one header per aggregated batch instead of one header per single information unit reduce the time as well as the cost. Whenever the
minimum number of packets are not available for the transmission, the multiplexer enters into the random vacation period with well pre-defined rule and
that time may be used to execute some subsidiary works. In order to prevent the congestion in overload control telecommunication and to bypass the
transmission errors, excessive delays of the time required to transmit the packets, the processing/transmission rate dynamically vary with the size of the packets which is exactly the batch-size-dependent service time policy.

\section{Model description}\lb{MD}
\begin{itemize}

\item \emph{Discrete-time set-up}: Let the time axis be slotted in intervals of equal length where length of each slot is unity. More specifically, let us assume that the time axis be marked by $0,1,2,\ldots,k,\ldots$. Here we discuss the model for late arrival delayed access system (LAS-DA) and therefore a potential arrival takes place in ($k-,k$) and a potential batch departure occurs in ($k,k+$). It may be noted here that under this discrete-time set-up, an arrival and a departure/transmission may take place simultaneously at a slot boundary. Different epochs of LAS-DA system are presented pictorially in figure \ref{figg1}.
\begin{figure}[h!]
\begin{center}
\includegraphics[scale=0.6]{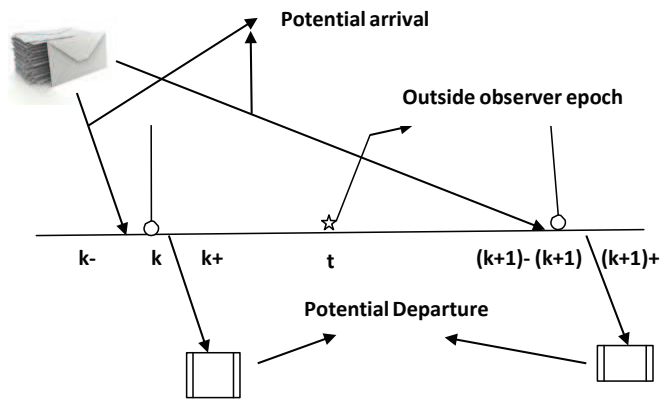}
\end{center}
\caption{Different epochs of LAS-DA system}\lb{figg1}
\end{figure}

\item \emph{Arrival process}: We assume that the customers/messages in terms of packets arrive according to the Bernoulli process with parameter $\la$ as
    it is analytically tractable and does not possess any fractal features like self similarity. Hence the inter-arrival times of the customers/messages are independent and geometrically distributed with probability mass function (pmf) $\psi_n=\bar \la ^{n-1} \la,~0<\la <1,~n\geq 1$
   and $\bar \la=1-\la$.

\item  \emph{Batch-service rule}: The packets are transmitted in group/batches according to the general bulk service ($a,b$) rule. The server only starts the service if the queue contains at least as many customers as the service threshold $`a$'. For the queue size $r$ ($a\leq r\leq b$), entire group of customers are taken for the service. When the queue size exceeds $`b$', then the server can transmit maximum $`b$' packets and others remain in the
    queue for the next round of service. It is worthwhile to mention here that a newly arriving customer/message cannot join the ongoing service even if
    there is a free capacity.

\item  \emph{Service process}: In most of the practical scenarios it is observed that, the transmission/processing times do not follow any specific well
       known probability distribution. Keeping this in mind, here it is assumed that the service times are generally distributed so that a large class of probability distribution can be covered. We also assume that the service time depends on the batch-size under service as this service mechanism plays a pivotal role to reduce congestions and to improve the productivity of the system. Let us define the random variable $T_i$, $a\leq i \leq b$, as the service time of a batch of `$i$' customers with pmf $s_i(n)=\mbox{Pr}(T_i=n),~n=1,2,3,\ldots$ with corresponding pgf $S^{*}_i(z)=\sum_{n=1}^{\infty}s_i(n)z^n$, the mean service time $s_i=\frac{1}{\mu_i}=S_i^{*(1)}(1)$, $a\leq i \leq b$, where $S_i^{*(1)}(1)$ is the first order derivative evaluated at $z=1$.

\item \emph{Vacation process}: The concerned queueing model is analyzed with two type of vacation rules viz., single vacation and multiple vacation using an indicator variable $\delta_p$ as follows:
    \begin{align*}
\delta_p =\left\{\begin{array}{r@{\mskip\thickmuskip}l}
& 0~,\hspace{1.0cm}\mbox{for single vacation}\\
&1,\hspace{1.3cm} \mbox{for multiple vacation}.
\end{array}\right.
\end{align*}
    It has to be kept in mind that the server must decide the pre-defined vacation rule before the service initiation. The results for the corresponding queue with single vacation can be obtained by substituting $\delta_p=0$ and that with multiple vacation by substituting $\delta_p=1$.
    \begin{itemize}
    \item \emph{Single vacation rule}: After the completion of a batch-service, if the server finds at least `$a$' customers waiting, it continues the service process, otherwise it leaves for a vacation of random length with pmf $v_n,~n\geq 1$, pgf $V^*(z)=\sum_{n=1}^{\infty}v_n z^n$  and mean $E(V)=\bar v=V^{*(1)}(1)$. At the vacation termination epoch, if the queue length is still less than the threshold value `$a$', then the server remains dormant till the minimum threshold have been accumulated.

    \item \emph{Multiple vacation rule}: After the completion of a batch-service, if there is at least `$a$' customers waiting in the queue, it continues the service. If the queue length is less than `$a$', then the server leaves for a vacation of random time. After returning from the vacation, if still `$a$' customers have not been accumulated then the server leaves for another vacation and so on until at least `$a$' customers waiting in the queue. The length of random vacation time is generally distributed with pmf  $v_n,~n\geq 1$, pgf $V^*(z)=\sum_{n=1}^{\infty}v_n z^n$  and mean $E(V)=\bar v=V^{*(1)}(1)$.
    \end{itemize}

\item  \emph{Traffic intensity}: The traffic intensity of the system is $\rho=\displaystyle\frac{\la}{b ~\mu_b} <1$, which ensures the stability of the system.
\end{itemize}

\section{Steady-state governing system equations}\lb{GE}
In this section our main goal is to develop the governing equations of the concerned model in the steady-state. Let us first define the states of the system at
time $t$ as:
\begin{itemize}

\item $N_q(k-)$ $\equiv$ Number of packets/customers in the queue waiting to be transmitted,

\item $N_s(k-)$ $\equiv$ Number of packets/customers with the server,

\item $U(k-)$ $\equiv$ Remaining service time of a batch in service (if any),

\item $V(k-)$ $\equiv$ Remaining vacation time of the server excluding the current vacation slot,

\item $\zeta(t-)$ $\equiv$  State of the server defined as:
\begin{align*}
\hspace{1.4cm}\zeta(t-) = \left\{\begin{array}{r@{\mskip\thickmuskip}l}
& 2\hspace{0.8cm} \mbox{when~ the~server~ is ~busy~ },\\
&1\hspace{0.8cm}  \mbox{when ~the~server~ is~ on~ vacation,}\\
&0\hspace{0.8cm}  \mbox{when ~the~server~ is~ in~ the ~dormant~state.}\\
\end{array}\right.
\end{align*}

\end{itemize}
Further, let us define the joint probabilities as:
\bea p_{n,0}(k-)&=&\mbox{Pr}\{ N_q(k-)=n,~ N_s(k-)=0, ~\zeta(t-)=0  \}, \nn\\
p_{n,r}(u,k-)&=&\mbox{Pr}\{ N_q(k-)=n, ~N_s(k-)=r,~ U(k-)=u,~ \zeta(t-)=2 \}, \quad u\geq 1,~n\geq 0,~a\leq r\leq b.\nn\\
Q_{n}(u,k-)&=&\mbox{Pr}\{ N_q(k-)=n, ~V(k-)=u,~ \zeta(t-)=1 \}, \quad u\geq 1,~n\geq 0.\nn \eea
As the model is analyzed in the steady-state, let us define the limiting probabilities as:
\bea p_{n,0}&=&\lim_{k-\rightarrow \infty}p_{n,0}(k-), \nn\\
p_{n,r}(u)&=&\lim_{k-\rightarrow \infty}p_{n,r}(u,k-), ~~ u\geq 1,~n\geq 0,~a\leq r\leq b.\nn \\
Q_{n}(u)&=&\lim_{k-\rightarrow \infty}Q_{n}(u,k-), ~~ u\geq 1,~n\geq 0.\nn\eea
Observing the states of the system at the epochs $k-$ and $(k+1)-$, and using the supplementary variable technique (SVT), we obtain the following equations in the steady-state:
\begin{eqnarray}
p_{0,0} &=& \left(1-{\delta}_{p}\right)\bar{\lambda}p_{0,0} + \left(1-{\delta}_{p}\right)\bar{\lambda}Q_{0}(1)\lb{eq1}\\
p_{n,0} &=&\left(1-{\delta}_{p}\right)\bar{\lambda}p_{n,0} + \left(1-{\delta}_{p}\right)\bar{\lambda}Q_{n}(1) + \left(1-{\delta}_{p}\right)\lambda Q_{n-1}(1), \quad 1\leq n \leq a-1 \lb{eq2}\\
p_{0,a}(u)&=&\bar{\lambda}p_{0,a}(u+1) + \bar{\lambda}\sum_{m=a}^{b}p_{a,m}(1)s_{a}(u) + \lambda \sum_{m=a}^{b}p_{a-1,m}(1)s_{a}(u)+ \bar{\lambda}Q_{a}(1)s_{a}(u)\nn\\
&& ~~~~~~~~~~~~~~~~~~~~~~~~~~~~~~~~~~~~~~~~~~~~~~ + \lambda Q_{a-1}(1)s_{a}(u)+ \lambda\left(1-{\delta}_{p}\right) p_{a-1,0}s_{a}(u)\lb{eq3}\\
p_{0,r}(u)&=&\bar{\lambda}p_{0,r}(u+1)+ \bar{\lambda}\sum_{m=a}^{b}p_{r,m}(1)s_{r}(u) + \lambda \sum_{m=a}^{b}p_{r-1,m}(1)s_{r}(u)+ \bar{\lambda}Q_{r}(1)s_{r}(u)\nn\\
&&~~~~~~~~~~~~~~~~~ ~~~~~~~~~~~~~~~~~~~~~~~~~~~~+ \lambda Q_{r-1}(1)s_{r}(u),\quad a+1 \leq r \leq b\lb{eq4}\\
p_{n,r}(u) &=& \bar{\lambda}p_{n,r}(u+1) + \lambda p_{n-1,r}(u+1), \quad n \geq 1,\quad a \leq r \leq b-1\lb{eq5}\\
p_{n,b}(u) &=& \bar{\lambda}p_{n,b}(u+1)+\lambda p_{n-1,b}(u+1)+\bar{\lambda}\sum_{m=a}^{b}p_{n+b,m}(1)s_{b}(u)+ \lambda \sum_{m=a}^{b}p_{n+b-1,m}(1)s_{b}(u) \nonumber\\
&&~~~~~~~~~~~~~~~~~~~~~~~~~~~~~~~~~~~~~~~~~~~~ + \bar{\lambda}Q_{n+b}(1)s_{b}(u) + \lambda Q_{n+b-1}(1)s_{b}(u),\quad n\geq 1\lb{eq6}\\
Q_{0}(u) &=& \bar{\lambda}Q_{0}(u+1) + \bar{\lambda} \sum_{m=a}^{b}p_{0,m}(1) v(u) + {\delta_{p}}\bar{\lambda} Q_{0}(1)v(u)\lb{eq7}\\
Q_{n}(u) &=& \bar{\lambda}Q_{n}(u+1) + \lambda Q_{n-1}(u+1) + \bar{\lambda} \sum_{m=a}^{b} p_{n,m}(1)v(u) + \lambda \sum_{m=a}^{b} p_{n-1,m}(1)v(u)\nonumber\\
&&~~~~~~~~~~~~~~~~~~~~~~~~~~~~~~~~ +{\delta_{p}} \left[\bar{\lambda}Q_{n}(1)+\lambda Q_{n-1}(1)\right]v(u), \quad 1\leq n \leq a-1 \lb{eq8}\\
Q_{n}(u) &=& \bar{\lambda}Q_{n}(u+1) + \lambda Q_{n-1}(u+1), \quad n \geq a. \lb{eq9}
\end{eqnarray}
Further, let us define
\begin{eqnarray}
p^*_{n,r}(z) &=& \sum_{u=1}^{\infty}p_{n,r}(u) z^{u}, \quad n \geq 0, \quad a \leq r \leq b\nonumber\\
Q^*_{n}(z) &=& \sum_{u=1}^{\infty} Q_{n}(u)z^{u}, \quad n \geq 0.\nonumber
\end{eqnarray}
Consequently, the two results provided below immediately follow which will be used in the analysis.
\begin{eqnarray}
p_{n,r} &\equiv& p^*_{n,r}(1) = \sum_{u=1}^{\infty} p_{n,r}(u), \quad a\leq r \leq b, \quad n\geq 0\nonumber\\
Q_{n} &\equiv& Q^*_{n}(1) = \sum_{u=1}^{\infty} Q_{n}(u), \quad n \geq 0.\nonumber
\end{eqnarray}
Our foremost focal point is to get the joint distributions from (\ref{eq1}) - (\ref{eq9}) for which first we take the $z$-transform of the those equations.
Multiplying (\ref{eq3}) - (\ref{eq9}) by $z^u$ and adding over $u$ from 1 to $\infty$, we obtain
\begin{eqnarray}
\left(\frac{z-\bar{\lambda}}{z}\right)p^*_{0,a}(z) &=&
 \bar{\lambda}\sum_{m=a}^{b}p_{a,m}(1)S^*_{a}(z) + \lambda \sum_{m=a}^{b}p_{a-1,m}(1)S^*_{a}(z) + \bar{\lambda}Q_{a}(1)S^*_{a}(z)\nonumber\\
&& + \lambda Q_{a-1}(1)S^*_{a}(z) +\left(1-\delta_{p}\right) \lambda p_{a-1,0} S^*_{a}(z)-\bar{\lambda}p_{0,a}(1)\lb{eq10}\\
\left(\frac{z-\bar{\lambda}}{z}\right)p^*_{0,r}(z) &=&
 \bar{\lambda}\sum_{m=a}^{b}p_{r,m}(1)S^*_{r}(z) + \lambda \sum_{m=a}^{b}p_{r-1,m}(1)S^*_{r}(z)+ \bar{\lambda}Q_{r}(1)S^*_{r}(z)\nonumber\\
&& + \lambda Q_{r-1}(1)S^*_{r}(z)-\bar{\lambda}p_{0,r}(1), \quad a+1 \leq r \leq b \lb{eq11}\\
\left(\frac{z-\bar{\lambda}}{z}\right)p^*_{n,r}(z) &=&  \frac{\lambda}{z}p^*_{n-1,r}(z)-\bar{\lambda}p_{n,r}(1) - \lambda p_{n-1,r}(1),
\quad n \geq 1, a \leq r \leq b-1 \lb{eq12}\\
\left(\frac{z-\bar{\lambda}}{z}\right)p^*_{n,b}(z) &=&  \frac{\lambda}{z}p^*_{n-1,b}(z) + \bar{\lambda}\sum_{m=a}^{b}p_{n+b,m}(1)S^*_{b}(z)+ \lambda \sum_{m=a}^{b}p_{n+b-1,m}(1)S^*_{b}(z) \nonumber\\
&& + \bar{\lambda}Q_{n+b}(1)S^*_{b}(z)+ \lambda Q_{n+b-1}(1)S^*_{b}(z)-\bar{\lambda}p_{n,b}(1) - \lambda p_{n-1,b}(1) ,\quad n \geq 1 \lb{eq13} \\
\left(\frac{z-\bar{\lambda}}{z}\right)Q^*_{0}(z) &=&  \bar{\lambda} \sum_{m=a}^{b} p_{0,m}(1) V^*(z) + \delta_{p} \bar{\lambda} Q_{0}(1)V^*(z)-\bar{\lambda} Q_{0}(1) \lb{eq14}\\
\left(\frac{z-\bar{\lambda}}{z}\right)Q^*_{n}(z) &=&  \frac{\lambda}{z}Q^*_{n-1}(z)+ \sum_{m=a}^{b}\biggl[\bar{\lambda}p_{n,m}(1) + \lambda p_{n-1,m}(1)\biggr]V^*(z)  \nonumber\\
&&+\delta_{p}\bigg[\bar{\lambda}Q_{n}(1)+\lambda Q_{n-1}(1)\bigg]V^*(z)-\bar{\lambda}Q_{n}(1)-\lambda Q_{n-1}(1), \quad 1 \leq n \leq a-1 \lb{eq15}\\
\left(\frac{z-\bar{\lambda}}{z}\right)Q^*_{n}(z) &=& \frac{\lambda}{z}Q^*_{n-1}(z)-\bar{\lambda}Q_{n}(1)-\lambda Q_{n-1}(1), \quad n \geq a. \lb{eq16}
\end{eqnarray}
Now using equations (\ref{eq1}), (\ref{eq2}) and (\ref{eq10}) - (\ref{eq16}) we derive two results (presented below in the form of lemmas) which will be used in sequel.
\begin{lem}
The probabilities $\left(p_{n,r}^+,p_{n,r}(1)\right)$ and $\left(Q_{n}^+,Q_{n}(1)\right)$ are connected by the relation
\begin{eqnarray}
p^+_{0,r} &=& \tau^{-1}\left\{\lambda p_{0,r}(1)\right\},\quad a \leq r \leq b \lb{eq17}\\
p^+_{n,r} &=& \tau^{-1}\left\{\bar{\lambda}p_{n,r}(1) + \lambda p_{n-1,r}(1)\right\}, \quad n \geq 1, \quad a \leq r \leq b \lb{eq18}\\
Q^+_{0} &=& \tau^{-1}\left\{\bar{\lambda}Q_{0}(1)\right\} \lb{eq19}\\
Q^+_{n} &=& \tau^{-1}\left\{\bar{\lambda}Q_{n}(1) + \lambda Q_{n-1}(1)\right\} ,\quad n \geq 1 \lb{eq20}
\end{eqnarray}
where $\tau = \sum_{m=0}^{\infty}\sum_{r=a}^{b}p_{m,r}(1) + \sum_{m=0}^{\infty} Q_{m}(1).$~\\
Equations (\ref{eq17}) and (\ref{eq18}) provide the relation between the joint queue and server content distribution at service completion epoch with the joint distribution of queue and server content when the service is about to complete. Similarly equations (\ref{eq19}) and (\ref{eq20}) can be interpreted. Here $\tau^{-1}$ gives us the mean of service completion or vacation termination per unit time, i.e., mean departure rate from the busy state or vacation state.\\
\noindent \textbf{Proof}: Now we establish the relationship between $p_{n,r}^+$ and $p_{n,r}(1)$, where $p_{n,r}(1)$ denotes the joint probability that there are $n$ customers in the queue and $r$ with the server and remaining service time is just one slot. Applying Bayes' theorem, we have
\bea p_{n,r}^+&=& \mbox{Pr}\{\mbox{ $n$ customers in the queue and $r$ with the server just prior to the service completion }\nn\\&& \mbox{epoch and a customer arrives $\mid$ total number of customers in the queue just prior to the }\nn\\&& \mbox{service completion epoch of a batch of size  $r$} ~~ \},~n\geq 0,~a\leq r \leq b, \nn\\
&=&  \left\{\begin{array}{r@{\mskip\thickmuskip}l}
& \frac{1}{\tau}\left[\bar \la p_{0,r}(1)\right],\\
& \frac{1}{\tau} \left[\bar \la p_{n,r}(1)+\la p_{n-1,r}(1)\right],~~n\geq 1
\end{array}\right.\nn\eea
where
\bea \tau &=&\mbox{Pr}\{ \mbox{total number of customers in the queue just prior to the service} \nn\\&& \mbox{ completion or vacation termination epoch}\}= \sum_{m=0}^{\infty}\sum_{r=a}^{b}p_{m,r}(1)+\sum_{m=0}^{\infty}Q_{m}(1).\nn\eea
In the similar fashion equations (\ref{eq19}) and (\ref{eq20}) can be established.
\end{lem}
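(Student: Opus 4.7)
\emph{Proof proposal.} The plan is a short Bayes'--theorem / conditional--probability argument. The quantity $p_{n,r}(1)$ is the stationary probability of the state ``queue $=n$, server content $=r$, remaining service time $=1$'', which forces the batch in service to depart in the very next slot. Under the LAS-DA convention of Section~\ref{MD}, during that slot the potential arrival is registered first (at $k$) and the batch departure only at $k+$; so, starting from $(n,r,1)$, the post-completion queue length is $n$ with probability $\bar\lambda$ and $n+1$ with probability $\lambda$. The analogous statement holds for the last slot of a vacation, with $Q_n(1)$ playing the role of $p_{n,r}(1)$.

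The first concrete step is to sum, for each fixed $(n,r)$, the two ways of arriving at ``queue $=n$ immediately after a batch of size $r$ has departed'': either no arrival from state $(n,r,1)$, or one arrival from state $(n-1,r,1)$. This yields a per-slot probability mass of $\bar\lambda\,p_{n,r}(1)+\lambda\,p_{n-1,r}(1)$ for $n\ge 1$ and just $\bar\lambda\,p_{0,r}(1)$ for $n=0$ (the would-be $p_{-1,r}(1)$ contribution is void). Repeating the same tally with vacation termination gives $\bar\lambda\,Q_n(1)+\lambda\,Q_{n-1}(1)$ for $n\ge 1$ and $\bar\lambda\,Q_0(1)$ for $n=0$.

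To turn these per-slot masses into a genuine probability distribution indexed by the embedded service-completion / vacation-termination epochs, I would normalise by the total per-slot probability of such events,
\[
\tau \;=\; \sum_{m=0}^{\infty}\sum_{r=a}^{b} p_{m,r}(1) \;+\; \sum_{m=0}^{\infty} Q_{m}(1),
\]
i.e.\ the stationary probability that the current slot is the last slot of an ongoing service or an ongoing vacation. Dividing the masses of the previous step by $\tau$ produces the four identities (\ref{eq17})--(\ref{eq20}) and simultaneously justifies the stated interpretation of $\tau^{-1}$ as the mean inter-completion/termination time.

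The only point demanding real care, and the closest thing to an obstacle, is keeping the LAS-DA bookkeeping straight: the arrival in the completing slot is counted in the queue seen at $k+$ but not at $k-$, which is exactly why the shift $p_{n-1,r}(1)$ (respectively $Q_{n-1}(1)$) appears with weight $\lambda$ beside the unshifted term weighted by $\bar\lambda$. Once that timing is fixed, the four formulas follow from Bayes' rule and no further computation is needed; the vacation identities are obtained by literally repeating the service argument.
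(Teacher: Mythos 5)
Your argument is essentially the paper's own: both apply Bayes' theorem to renormalize the stationary per-slot masses $\bar{\lambda}p_{n,r}(1)+\lambda p_{n-1,r}(1)$ (and their vacation analogues $\bar{\lambda}Q_{n}(1)+\lambda Q_{n-1}(1)$) by the total last-slot probability $\tau$, with the LAS-DA timing accounting for the $\lambda$-weighted shift. Note that your boundary case $p^{+}_{0,r}=\tau^{-1}\bar{\lambda}p_{0,r}(1)$ matches the paper's proof and the generating-function identity (\ref{eq28}); the factor $\lambda$ printed in the displayed equation (\ref{eq17}) appears to be a typo.
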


\begin{lem}
The value of $\tau$ is given by
\begin{eqnarray}
\hspace*{-1.0cm}\tau &=& \sum_{m=0}^{\infty}\sum_{r=a}^{b}p_{m,r}(1) + \sum_{m=0}^{\infty} Q_{m}(1)
= \frac{1-\left(1-\delta_{p}\right)\sum_{n=0}^{a-1}p_{n,0}}{\omega}\lb{eq21}\\
\hspace*{-1.0cm}\text{where}\nonumber \\
\hspace*{-1.0cm}\omega &=& \sum_{n=0}^{a-1}\left\{p^+_{n}E(V)+\left(1-\delta_{p}\right)Q^+_{n}s_{a}+\delta_{p} Q^+_{n}E(V)\right\}+\sum_{n=a}^{b}
\left(p^+_{n}+Q^+_{n}\right)s_{n} + \sum_{n=b+1}^{\infty}\left(p^+_{n}+Q^+_{n}\right)s_{b} \lb{eq22}
\end{eqnarray}
\end{lem}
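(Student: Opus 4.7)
The plan is to derive (\ref{eq21}) directly from the normalization identity
\[
\sum_{n=0}^{a-1}p_{n,0}+\sum_{n\ge 0}\sum_{r=a}^{b}p_{n,r}+\sum_{n\ge 0}Q_n=1,
\]
by evaluating the busy and vacation partial sums via the transformed equations (\ref{eq10})--(\ref{eq16}). First I would observe that when $\delta_p=1$, equations (\ref{eq1})--(\ref{eq2}) force $p_{n,0}=0$ for $0\le n\le a-1$, so the dormant contribution can be written uniformly as $(1-\delta_p)\sum_{n=0}^{a-1}p_{n,0}$. It therefore suffices to establish the identity $\sum_{r=a}^{b}\sum_{n\ge 0}p_{n,r}+\sum_{n\ge 0}Q_n=\tau\omega$, after which (\ref{eq21}) follows by division.

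For the busy part, for each $r\in\{a,\ldots,b\}$ I would introduce the bivariate generating function $A_r(w,z)=\sum_{n\ge 0}w^n p^*_{n,r}(z)$. Multiplying the appropriate $n\ge 1$ equation, either (\ref{eq12}) or (\ref{eq13}), by $w^n$ and summing, then adjoining the boundary equation (\ref{eq10}) or (\ref{eq11}), gives a functional equation of the form
\[
A_r(w,z)\,\frac{z-\bar\lambda-\lambda w}{z}\;=\;S^*_r(z)\,\Phi_r(w)\;-\;(\bar\lambda+\lambda w)\,B_r(w),
\]
where $B_r(w)=\sum_n w^n p_{n,r}(1)$ and the source $\Phi_r(w)$ collects the right-hand-side terms of (\ref{eq10})--(\ref{eq13}); by Lemma~1 its coefficients reduce to $\tau(p_n^++Q_n^+)$ with a small dormant correction at $r=a$. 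Cancelling the apparent pole at $z=\bar\lambda+\lambda w$ fixes $B_r(w)$, and then $P_r(1):=A_r(1,1)$ is extracted by L'H\^opital at $z=1$ followed by $w=1$. The outcome should be $P_r(1)=\tau(p_r^++Q_r^+)s_r$ for $a+1\le r\le b-1$, the same expression plus the extra $(1-\delta_p)\lambda p_{a-1,0}s_a$ for $r=a$ (coming from the dormant source in (\ref{eq10})), and $P_b(1)=\tau\sum_{n\ge b}(p_n^++Q_n^+)s_b$. An entirely parallel calculation on (\ref{eq14})--(\ref{eq16}), with $V^*(\cdot)$ playing the role of $S^*_r(\cdot)$, produces $\sum_n Q_n=\tau E(V)\sum_{n=0}^{a-1}(p_n^++\delta_p Q_n^+)$.

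Summing these contributions reproduces $\tau\omega$ exactly, apart from the stray term $(1-\delta_p)\lambda p_{a-1,0}s_a$ inherited from $P_a(1)$, which must be matched against the $(1-\delta_p)s_a\sum_{n=0}^{a-1}Q_n^+$ piece of $\omega$. I expect this reconciliation to be the principal obstacle. It amounts to the dormant-state identity $\lambda p_{a-1,0}=\tau\sum_{k=0}^{a-1}Q_k^+$ in the single-vacation case, which is obtained by telescoping the recursion $\lambda p_{n,0}-\lambda p_{n-1,0}=\tau Q_n^+$ derived from (\ref{eq1})--(\ref{eq2}) together with Lemma~1 (with boundary value $\lambda p_{0,0}=\tau Q_0^+$). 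Once this identity is in hand, the busy and vacation partial sums combine cleanly to $\tau\omega$, and substitution back into the normalization equation delivers (\ref{eq21}).
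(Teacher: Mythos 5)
Your proposal is correct and follows essentially the same route as the paper's proof: both reduce the claim to showing $\sum_{n,r}p_{n,r}+\sum_n Q_n=\tau\omega$ by passing the transformed equations (\ref{eq10})--(\ref{eq16}) through Lemma~4.1, taking $z\to 1$ with L'H\^opital, and invoking the normalization condition, with the dormant term handled via the telescoped identity (\ref{eq23}). The only differences are organizational: you treat each server-content level $r$ (and the vacation block) separately and reconcile the $(1-\delta_p)\lambda p_{a-1,0}s_a$ term at the end, and you introduce an auxiliary variable $w$ with a pole-cancellation step that is superfluous since only $w=1$ is ever needed, whereas the paper substitutes (\ref{eq23}) into (\ref{eq10}) up front and performs a single global sum before the limit.
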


\begin{proof}[\textbf{Proof}]
For single vacation substituting $\delta_{p} = 0 $ in (\ref{eq1}) and (\ref{eq2}), we obtain
\begin{eqnarray}
\lambda p_{n,0} = \bar{\lambda}Q_{0}(1) + \sum_{i=1}^{n}\bigg\{\bar{\lambda}Q_{i}(1) + \lambda Q_{i-1}(1)\bigg\}, \quad 1 \leq n\leq a-1 \lb{eq23}
\end{eqnarray}
Using (\ref{eq23}) in (\ref{eq10}) and then summing over $r$ from $a$ to $b$ and $n$ from 0 to $\infty$ in the equations (\ref{eq10}) - (\ref{eq16}), after some simplification, we get
\begin{eqnarray}
&&\left(\dfrac{z-1}{z}\right)\left\{\sum_{n=0}^{\infty}\sum_{r=a}^{b}p^*_{n,r}(z) + \sum_{n=0}^{\infty}Q^*_{n}(z)\right\}\nn\\
&=& \left[\bar{\lambda}Q_{0}(1) + \sum_{n=1}^{a-1}\biggl\{\bar{\lambda}Q_{n}(1) + \lambda Q_{n-1}(1)\biggr\}\right]\biggl\{\big(1-\delta_{p}\big)S^*_{a}(z) + \delta_{p}V^*(z)\biggr\} \nonumber\\
&&+ \sum_{r=a}^{b}\left[\bar{\lambda}p_{0,r} + \sum_{n=1}^{a-1}\biggl\{\bar{\lambda}p_{n,r}(1) + \lambda p_{n-1,r}(1)\biggr\}\right]V^*(z)\nn\\ &&+\sum_{n=a}^{b}\left[\sum_{r=a}^{b}\biggl\{\bar{\lambda}p_{n,r}(1) + \lambda p_{n-1,r}(1)\biggr\}+\bar{\lambda}Q_{n}(1) + \lambda Q_{n-1}(1)\right]S^*_{n}(z) \nonumber\\
&&+\sum_{n=b+1}^{\infty}\left[\sum_{r=a}^{b}\biggl\{\bar{\lambda}p_{n,r}(1) + \lambda p_{n-1}(1)\biggr\}+\bar{\lambda}Q_{n}(1) + \lambda Q_{n-1}(1)\right]S^*_{b}(z)\nn\\
&& -\sum_{n=0}^{\infty}\sum_{r=a}^{b}p_{n,r}(1) -\sum_{n=0}^{\infty}Q_{n}(1)\nonumber
\end{eqnarray}
Now taking limit $z \rightarrow 1$ in both sides of the above expression, using L'H$\hat{o}$pital's rule and the normalizing condition $\left(1-\delta_{p}\right)\sum_{n=0}^{a-1}p_{n,0} + \sum_{n=0}^{\infty}\sum_{r=a}^{b}p_{n,r} + \sum_{n=0}^{\infty}Q_{n} = 1$, we obtain the desired result.
\end{proof}

\section{\textbf{Joint queue length and server content distribution at service completion epoch of a batch}}\lb{DE}
The primary intention of this section is to provide the complete queue and server content distribution together. Firstly, our center of focus is to derive the bivariate pgf of the joint distribution of queue and server content at the service completion of a batch. Secondly we turn our attention to extract those joint distributions in a simple and elegant way.\\
In view of this, we first define the following pgfs as:
\begin{eqnarray}
P(z, x, y) &=& \sum_{n=0}^{\infty}\sum_{r=a}^{b}p^*_{n,r}(z)x^{n}y^{r}, \quad  |x| \leq 1, \quad  |y| \leq 1 \lb{eq24}\\
P^+(x, y) &=& \sum_{n=0}^{\infty}\sum_{r=a}^{b}p^+_{n,r}x^{n}y^{r}, \quad |x| \leq 1, \quad |y| \leq 1 \lb{eq25}\\
P^+(x, 1) &=& \sum_{n=0}^{\infty}\sum_{r=a}^{b}p^+_{n,r}x^{n}=\sum_{n=0}^{\infty}p^+_{n}x^{n} = P^+(x), \quad |x| \leq 1 \lb{eq26}\\
Q^+(x) &=& \sum_{n=0}^{\infty}Q^+_{n}x^{n}, \quad |x| \leq 1. \lb{eq27}
\end{eqnarray}
Since our major concern is to find the bivariate pgf, multiplying (\ref{eq10}) - (\ref{eq13}) by appropriate powers of $x$ and $y$, summing over $n$ from $0$ to $\infty$ and $r$ from $a$ to $b$, and using (\ref{eq23}), we get

\begin{eqnarray}
&&\biggl\{\dfrac{z-\left(\bar{\lambda}+\lambda x\right)}{z}\biggr\} P(z, x, y)\nn\\
&=& \left(1-\delta_{p}\right)\biggl\{\bar{\lambda}Q_{0}(1) + \sum_{n=1}^{a-1}\bigg(\bar{\lambda}Q_{n}(1) + \lambda Q_{n-1}(1)\bigg)\biggr\}S^*_{a}(z)y^{a}\nonumber\\
&&+\sum_{n=a}^{b}\biggl[\biggl\{\bar{\lambda}Q_{n}(1)+\lambda Q_{n-1}(1)\biggr\} +
 \sum_{r=a}^{b}\biggl\{\bar{\lambda}p_{n,r}(1) + \lambda p_{n-1,r}(1)\biggr\}\biggl]S^*_{n}(z)y^{n} \nonumber\\
&&+ \sum_{n=b+1}^{\infty}\biggl[\biggl\{\bar{\lambda}Q_{n}(1)+\lambda Q_{n-1}(1)\biggr\}+ \sum_{r=a}^{b}\biggl\{\bar{\lambda}p_{n,r}(1) + \lambda p_{n-1,r}(1)\biggr\}\biggl]S^*_{b}(z)x^{n-b}y^{b}\nn\\
&&-\bar{\lambda}\sum_{r=a}^{b}p_{0,r}(1)y^{r} -\sum_{n=1}^{\infty}\sum_{r=a}^{b}\biggl\{\bar{\lambda}p_{n,r}(1) +\lambda p_{n-1,r}(1)\biggr\}x^{n}y^{r} \lb{eq28}
\end{eqnarray}

\noindent Now substituting $z= \bar{\lambda}+\lambda x$ in (\ref{eq28}) and using (\ref{eq17}) - (\ref{eq20}) and (\ref{eq25}), we obtain
\begin{eqnarray}
P^+(x, y) &=&  \left(1-\delta_{p}\right)\sum_{n=0}^{a-1}Q^+_{n}K^{(a)}(x)y^{a} + \sum_{n=a}^{b}p^+_{n} K^{(n)}(x)y^{n} + \sum_{n=a}^{b}Q^+_{n}K^{(n)}(x)y^{n}\nonumber\\
&&+ \sum_{n=b+1}^{\infty}p^+_{n}K^{(b)}(x)x^{n-b}y^{b} + \sum_{n=b+1}^{\infty}Q^+_{n}K^{(b)}(x)x^{n-b}y^{b} \lb{eq29}
\end{eqnarray}
where $K^{(r)}(x)=S^*_{r}(\bar{\lambda}+\lambda x)$ is the pgf of $k_j^{(r)},~(a\leq r \leq b,~j\geq 0)$ and
\bea k_j^{(r)}=\mbox{Pr}\{j ~\mbox{arrivals during the service time of a batch of size}~ r\}.\nn\eea
Now multiplying (\ref{eq14}) - (\ref{eq16}) with appropriate powers of $x$ and summing over $n$ from $0$ to $\infty$, we get
\begin{eqnarray}
\biggl\{\dfrac{z-\left(\bar{\lambda}+\lambda x\right)}{z}\biggr\}
\sum_{n=0}^{\infty}Q^*_{n}(z)x^{n} &=& \sum_{m=a}^{b}\biggl\{p_{0,m}(1)+\sum_{n=1}^{a-1}\biggl(\bar{\lambda}p_{n,m}(1)+\lambda p_{n-1,m}(1)\biggr)\biggr\}x^{n}V^{*}(z)\nonumber\\
&&+\delta_{p}\biggl\{\bar{\lambda}Q_{0}(1) + \sum_{n=1}^{a-1}\bigg(\bar{\lambda}Q_{n}(1) + \lambda Q_{n-1}(1)\bigg)\biggr\}x^{n}V^*(z)\nonumber\\
&&-\bar{\lambda}Q_{0}(1)-\sum_{n=1}^{\infty}\biggl\{\bar{\lambda}Q_{n}(1)+\lambda Q_{n-1}(1)\biggr\}x^{n} \lb{eq30}
\end{eqnarray}

\noindent Putting $z=\bar{\lambda}+\lambda x$ in (\ref{eq30}) and using (\ref{eq17}) - (\ref{eq20}) and (\ref{eq27}), we get
\begin{eqnarray}
Q^{+}(x) = \sum_{n=0}^{a-1}\left(p^+_{n} + \delta_{p} Q^+_{n}\right)x^{n}H(x) \lb{eq31}
\end{eqnarray}
where $H(x)=\sum_{j=0}^{\infty} h_j x^j=V^*(\bar{\lambda}+\lambda x)$ is the pgf of $h_j$ and
\bea h_j=\mbox{Pr}\{j ~\mbox{arrivals during the vacation time of the server}\}.\nn\eea
Now substituting $y=1$ in (\ref{eq29}) and using (\ref{eq26}) and (\ref{eq31}) and after some algebraic manipulation, we obtain
\begin{eqnarray}
P^{+}(x) =\frac{\begin{aligned}\sum_{n=0}^{a-1}\left[p^+_{n}x^{n}\biggl\{H(x)-1\biggr\}K^{(b)}(x) +Q^+_{n}\left\{\left(1-\delta_{p}\right)x^{b}K^{(a)}(x)\right.\right.\\ \left.\left.+\big(\delta_{p}H(x)-1\big)x^{n}K^{(b)}(x)\right\}\right] +\sum_{n=a}^{b-1}\biggl[\biggl(p^+_{n}+Q^+_{n}\biggr)\biggl\{x^{b}K^{(n)}(x)-x^{n}K^{(b)} (x)\biggr\}\biggr] \end{aligned}}{x^{b}-K^{(b)}(x)} \lb{eq32}
\end{eqnarray}
The above expression represents the pgf of only queue length distribution at service completion epoch.\\
Now looking back into (\ref{eq29}), using (\ref{eq31}) and (\ref{eq32}), after some algebraic simplification, we get
\begin{eqnarray}
P^{+}(x,y) = \frac{\begin{aligned}\sum_{n=0}^{a-1}\biggl[p^+_{n}x^{n}\biggl\{H(x)-1\biggr\}y^{b}K^{(b)}(x)\\
+Q^+_{n}\biggl\{\left(1-\delta_{p}\right)\bigg(\left(x^{b}-K^{(b)}(x)\right)y^{a}+y^{b}K^{(b)}(x)\bigg)K^{(a)}(x)\\
+\left(\delta_{p}H(x)-1\right)x^{n}y^{b}K^{(b)}(x)\biggr\}\biggr] \\+\sum_{n=a}^{b-1}\biggl(p^+_{n}+Q^+_{n}\biggr) \biggl\{\left(x^{b}-K^{(b)}(x)\right)\biggl(y^{n}K^{(n)}(x)-x^{n-b}y^{b}K^{(b)}(x)\biggr) \\
+\biggl(K^{(n)}(x)-x^{n-b}K^{(b)}(x)\biggr)y^{b}K^{(b)}(x)\biggr\}\end{aligned}}{x^{b}-K^{(b)}(x)} \lb{eq33}
\end{eqnarray}
The above expression designates the bivariate pgf of queue and server content distribution at service completion epoch, which is the key ingredient of our analysis. No such result is available in the literature so far to the best of authors' knowledge. One can grab the joint distribution by inverting this bivariate pgf which is discussed in the subsequent analysis. It may be remarked here that direct use of transition probability matrix would lead the same expression. However, we feel that use of supplementary variable technique reduces the complexity involved in the derivation for the same.~\\
\begin{remark}
The expression presented in (\ref{eq32}) is the pgf of \emph{only queue length distribution} at service completion epoch, which plays a noteworthy role to
calculate the tail distribution instead of using the bivariate pgf. In order to compute the cell loss ratio in ATM switching network, the tail distribution is
a powerful tool. To the best of authors' knowledge, this result is also not available so far in the literature. As a consequence, we hope that this also leads to a new contribution in the queueing literature.
\end{remark}

\subsection{\textbf{Steady-state conditions and determination of unknowns in the numerator of (\ref{eq33})}}
Although we have assumed the stability of the concerned queue in the model description, it can be proved using the stability condition given in Abonikov and Dukhovny \cite{abolnikov1991markov}. We conclude that the corresponding Markov chain is ergodic if and only if $\frac{d}{dx}K^{(b)}(x) < b$. Consequently, the steady-state distribution exists if $\frac{\la}{\mu_b}<b$, i.e., $\displaystyle \frac{\la}{b~\mu_b}=\rho < 1$.\\
\hspace*{0.3cm}Before extracting the joint probabilities from the bivariate pgf presented in (\ref{eq33}), we need to find the unknown quantities $p_n^+,~(0\leq n \leq b-1)$ and $Q^+_{n},~(0\leq n \leq a-1)$, appearing in the numerator. As both the expressions presented in (\ref{eq33}) and (\ref{eq32}) contains the same unknown quantities, we use (\ref{eq32}) instead of (\ref{eq33}) to find those unknowns, without loss of any generality. Although the procedure of determining of these unknowns is standard in the literature, for the sake of completeness we describe very briefly. Here we need to determine total $a+b$ unknowns. Using the result given below in the lemma form, we express $Q^+_{n}$ in terms of $p_n^+$ which eventually leads to total $b$ unknowns. Employing Rouch$\acute{\mbox{e}}$'s theorem, it is known that, $x^b-K^{(b)}(x)=D(x)$, (say) has exactly $b$ zeroes in the unit disk $|x|\leq 1$. Using these zeroes and normalizing condition, we obtain total $b$ linear simultaneous equations and hence solving them, we get those unknowns.
\begin{lem}The relation between $p^+_{n}$ and $Q^+_{n}$ are given by:
\begin{eqnarray}
Q^+_{n} &=& \sum_{i=0}^{n}\xi_i~p_{n-i}^+ ,\quad n=0,1,2,\ldots,a-1, ~~~\mbox{where}\nn\\
\xi_0 &=& \frac{h_0}{1-h_0}\nn\\
\xi_n &=& \frac{1}{1-h_0}\left[h_n+\sum_{i=1}^{n}h_i~\xi_{n-i}\right].\nn
\end{eqnarray}
\end{lem}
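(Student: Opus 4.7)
The plan is to derive the claimed relation directly from equation (\ref{eq31}), which already packages the needed information in generating-function form. Since $Q^{+}(x) = \sum_{n=0}^{\infty} Q_n^+ x^n$ and $H(x) = \sum_{j=0}^{\infty} h_j x^j$, equating coefficients of $x^n$ on both sides of (\ref{eq31}) yields, for $0 \le n \le a-1$,
\begin{equation*}
Q_n^+ \;=\; \sum_{i=0}^{n} h_i\, p_{n-i}^+ \;+\; \delta_p \sum_{i=0}^{n} h_i\, Q_{n-i}^+ .
\end{equation*}
Focusing on the multiple-vacation case ($\delta_p=1$), which is the non-trivial one and the case matched by the stated formula for $\xi_0$, I would split off the $i=0$ term on the right and rearrange to get
\begin{equation*}
(1-h_0)\,Q_n^+ \;=\; \sum_{i=0}^{n} h_i\, p_{n-i}^+ \;+\; \sum_{i=1}^{n} h_i\, Q_{n-i}^+ .
\end{equation*}

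The base case $n=0$ is immediate: $(1-h_0)Q_0^+ = h_0\,p_0^+$ gives $Q_0^+ = \frac{h_0}{1-h_0}\, p_0^+$, which is exactly $\xi_0\, p_0^+$. For the inductive step, I would assume the claim holds for all indices $k < n$, substitute $Q_{n-i}^+ = \sum_{j=0}^{n-i} \xi_j\, p_{n-i-j}^+$ into the double sum, and then switch the order of summation. Setting $m = n-i-j$ so that the inner index ranges collapse, the coefficient of $p_m^+$ on the right-hand side (divided by $1-h_0$) becomes $h_{n-m} + \sum_{i=1}^{n-m} h_i\, \xi_{n-m-i}$ for $m<n$, and $h_0$ for $m=n$. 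Dividing through by $1-h_0$ recovers precisely $\xi_{n-m}$ in both cases, by the recursive definition given in the lemma. Hence $Q_n^+ = \sum_{i=0}^{n} \xi_i\, p_{n-i}^+$, closing the induction.

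The only real obstacle is the bookkeeping in the reindexing step: one has to be careful that the self-referential recursion for $\xi_n$ (where $\xi_n$ is expressed in terms of $\xi_0,\dots,\xi_{n-1}$ but the identity for $Q_n^+$ involves $\xi_n$ multiplying $p_0^+$) comes out consistent. This consistency is guaranteed because the coefficient of $p_n^+$ on the right is $\frac{h_0}{1-h_0} = \xi_0$, matching the $i=n$ term on the left, while the coefficient of $p_m^+$ for $m<n$ picks up both the direct contribution $h_{n-m}p_m^+$ and the inductive contributions from $Q_{n-i}^+$, exactly reproducing the defining recursion for $\xi_{n-m}$. Apart from this index accounting there is no analytic difficulty; the argument is purely algebraic.
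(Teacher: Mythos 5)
Your proof is correct and follows the route the paper intends but does not write out: equating coefficients of $x^n$ in (\ref{eq31}) gives the convolution identity (recorded in the paper as (\ref{eq46})), after which the recursion defining $\xi_n$ is exactly what your induction on $n$ produces once the double sum is reindexed. Your side observation is also right that the stated $\xi_0=\frac{h_0}{1-h_0}$ matches only the multiple-vacation case $\delta_p=1$; for $\delta_p=0$ the same coefficient extraction gives simply $\xi_i=h_i$, so a statement uniform in $\delta_p$ would need $1-\delta_p h_0$ in the denominators of the recursion.
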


\subsection{\textbf{Extraction of probabilities from the known bivariate pgf}}
The completely known bivariate pgf, the key ingredient of our analysis, is in our grip. In this section, we now proceed to extract the joint probabilities using partial fraction technique. First we accumulate the coefficients of $y^j,~a\leq j \leq b$ from the bivariate pgf (\ref{eq33}) and are given below. \\
Coefficient of $y^{a}$ :
\begin{eqnarray}
\sum_{n=0}^{\infty}p^+_{n,a}x^{n} &=& \left(1-\delta_{p}\right)K^{(a)}(x)
\sum_{n=0}^{a-1}Q^+_{n} + \left(p^+_{a}+Q^+_{a}\right)K^{(a)}(x) \lb{eq34}
\end{eqnarray}
Coefficient of $y^j, a+1 \leq j \leq b-1$ :
\begin{eqnarray}
\sum_{n=0}^{\infty}p^+_{n,j}x^{n} &=& \left(p^+_{j}+Q^+_{j}\right)K^{(j)}(x) \lb{eq35}
\end{eqnarray}
Coefficient of $y^b$ :
\begin{eqnarray}
\sum_{n=0}^{\infty}p^+_{n,b}x^{n} &=& \dfrac{1}{x^{b}-K^{(b)}(x)}\left[\sum_{n=0}^{a-1}p^+_{n}x^{n}\bigg(H(x)-1\bigg)K^{(b)}(x)+
\left(1-\delta_{p}\right)K^{(a)}(x)K^{(b)}(x)\sum_{n=0}^{a-1}Q^+_{n}\right.\nonumber\\
&& \left.+\left(\delta_{p} H(x)-1\right)\sum_{n=0}^{a-1}Q^+_{n}x^{n}K^{(n)}(x)
+\sum_{n=a}^{b-1}\left(p^+_{n}+Q^+_{n}\right)\left(K^{(n)}(x)-x^{n-b}K^{(b)}(x)\right)K^{(b)}(x)\right]\nonumber\\
&&-\sum_{n=a}^{b-1}\left(p^+_{n}+Q^+_{n}\right)x^{n-b}K^{(b)}(x) \lb{eq36}
\end{eqnarray}
Collecting the coefficients of $x^n$ from both the side of (\ref{eq34}) and (\ref{eq35}), we obtain the joint distribution as:
\bea p^+_{n,a}&=&\left[\left(1-\delta_{p}\right)
\sum_{n=0}^{a-1}Q^+_{n} + \left(p^+_{a}+Q^+_{a}\right)\right]k_n^{(a)}\lb{eq37}\\
 p^+_{n,j}&=&\left(p^+_{j}+Q^+_{j}\right)k_n^{(j)},\quad a+1 \leq j \leq b-1. \lb{eq38}\eea
\hspace*{0.3cm}Now we are left with only $p_{n,b}^+$ for which we need to invert the right hand side of (\ref{eq36}). For the known service and vacation
time distributions the right hand side of (\ref{eq36}) is completely known functions of $x$. The corresponding inversion process is discussed in the
subsequent analysis.\\
\hspace*{0.3cm} After substituting $K^{(r)}(x)=S^*_r(\bar \la + \la x),~a\leq r \leq b,$ and $H(x)=V^*(\bar \la + \la x)$ in the expression (\ref{eq36}),
 let us denote numerator of (\ref{eq36}) as $\Lambda(x)$ and denominator as $D(x)$. The degrees of $\Lambda(x)$ and $D(x)$ depend on the distributions of service/vacation time and service threshold value `$a$' and maximum capacity `$b$'. Let the degree of numerator be $L_1$ and that of denominator be $M_1$, respectively. Now, we obtain $p_{n,b}^+$ in terms of roots of $D(x)=0$ by discussing the following cases:

\begin{itemize}
\item \textbf{When all the zeroes of $D(x)$ in $|x|>1$ are distinct}\\
Let us assume $\alpha_{1},\alpha_{2},\ldots,\alpha_{M_1}$ to be the roots of $D(x)=0$, out of which $M_1-b$ are the distinct roots in $|x| > 1$, as the degree of $D(x)$ is $M_1$ and $b$ insides roots are cancelled out with the roots of the numerator. It is also assumed that $D(x)=0$ has $b$ simple roots inside the unit circle. \\
~~\\
\emph{Case 1}: $L_1\geq M_1$\\
\hspace*{0.5cm}Applying the partial-fraction expansion, the rational function $\sum_{n=0}^{\infty}p_{n,b}^+x^n$ can be uniquely written as
\bea \sum_{n=0}^{\infty}p_{n,b}^+x^n=\sum_{i=0}^{L_1-M_1}\tau_{i} x^i +\sum_{k=1}^{M_1-b}\frac{c_{k}}{\alpha_{k}-x},\lb{eq39}\eea
for some constants $\tau_{i}$ and $c_{k}$'s. The first sum is the result of the division of the polynomial $\Lambda(x)$ by $D(x)$ and the constants $\tau_{i}$ are the coefficients of the resulting quotient. Using the residue theorem, we have
$ c_{k}=-\frac{\Lambda(\alpha_{k})}{D^{\prime}(\alpha_{k})},\quad k=1,2,\ldots,M_1-b. $\\
Now, collecting the coefficient of $x^n$ from both the sides of (\ref{eq39}), we have
\bea p_{n,b}^+= \left\{\begin{array}{r@{\mskip\thickmuskip}l}
&\tau_{n}+\sum\limits_{k=1}^{M_1-b}\frac{c_{k}}{\alpha^{n+1}_{k}},\quad 0\leq n\leq L_1-M_1,\\
&\\
& \sum\limits_{k=1}^{M_1-b}\frac{c_{k}}{\alpha^{n+1}_{k}},\quad n> L_1-M_1~.\end{array}\right.\lb{eq40}\eea
\emph{Case 2}: $L_1< M_1$\\
Using partial-fraction technique on $\sum_{n=0}^{\infty}p_{n,b}^+ x^n$, we have
\bea \sum_{n=0}^{\infty}p_{n,b}^+x^n=\sum_{k=1}^{M_1-b}\frac{c_{k}}{\alpha_{k}-x},\label{eq41}\eea
where $c_{k}=-\frac{\Lambda(\alpha_{k})}{D^{\prime}(\alpha_{k})},\quad k=1,2,\ldots,M_1-b. $\\
Now, collecting the coefficient of $x^n$ from both the sides of (\ref{eq41}), we obtain
\bea p_{n,b}^+ &=& \sum\limits_{k=1}^{M_1-b}\frac{c_{k}}{\alpha^{n+1}_{k}},\quad n\geq 0. \lb{eq42}\eea

\item \textbf{When some zeroes of $D(x)$ in $|x|>1$ are repeated}\\
The denominator $D(x)$ may have some multiple zeroes with absolute value greater than one. Let $D(x)$ has total $m$ multiple zeroes, say $\beta_{1},\beta_{2},\ldots,\beta_{m}$ with multiplicity $\pi_{1},\pi_{2},\ldots,\pi_{m}$, respectively. Further it is clear that $D(x)$ has total $(M_1-b-\chi)$ distinct zeroes, where $ \chi=\sum_{i=1}^{m}\pi_{i}$, say, $\alpha_{1},\alpha_{2},\ldots,\alpha_{M_1-b-\chi}$. It is also assumed that $D(x)$ has $b$ simple zeroes inside the unit circle.\\
~~\\
\emph{Case 1}: $L_1\geq M_1$\\
Applying the partial-fraction method, we can uniquely write $\sum_{n=0}^{\infty}p_{n,b}^+x^n,$ as
 \bea \hspace*{-1.0cm}\sum_{n=0}^{\infty}p_{n,b}^+x^n=\sum_{i=0}^{L_1-M_1}\tau_{i} x^i+\sum_{k=1}^{M_1-b-\chi}\frac{c_{k} }{\alpha_{k}-x}+\sum_{\nu=1}^{m}\sum_{i=1}^{\pi_{\nu}}\frac{\zeta_{\nu,i}}{(\beta_{\nu}-x)^{\pi_{\nu}-i+1}} \label{eq43}. \eea
Using residue theorem, we obtain
  \begin{small}
\bea \hspace*{-1.2cm}c_{k}&=&-\frac{\Lambda(\alpha_{k})}{D^{\prime}(\alpha_{k})},\quad k=1,2,\ldots,M_1-b-m, \nn \\
 \hspace*{-1.2cm}\zeta_{\nu,i}&=&
 \frac{1}{(\pi_{\nu}-i)!}~\lim_{x \rightarrow \beta_{\nu}}~\frac{d^{(\pi_{\nu}-i)}}{dx^{(\pi_{\nu}-i)}}\left[ \frac{(\beta_{\nu}-x)^{\pi_{\nu}}\Lambda(x)}{D(x)}\right],~ \nu=1,2,\ldots,m, ~ i=1,2,\ldots,\pi_{\nu}.\nn \eea
 \end{small}
Now collecting the coefficient of $x^n$ from both the sides of (\ref{eq43}), we have the distribution $p_{n,b}^+~,~n\geq 0$ as:
\begin{small}
\bea \hspace*{-1cm}p_{n,b}^+=\displaystyle \left\{\begin{array}{r@{\mskip\thickmuskip}l}
&\tau_{n}+\sum\limits_{k=1}^{M_1-b-\chi}\frac{c_{k}}{\alpha^{n+1}_{k}}+\displaystyle
\sum_{\nu=1}^{m}\sum_{i=1}^{\pi_{\nu}}
\binom{\pi_{\nu}+n-i}{\pi_{\nu}-i}\frac{\zeta_{\nu, i}}{\beta_{\nu}^{\pi_{\nu}+n+1-i}},~~ 0\leq n\leq L_1-M_1,\\
& \\
& \sum\limits_{k=1}^{M_1-b-\chi}\frac{c_{k}}{\alpha^{n+1}_{k}}+\displaystyle
\sum_{\nu=1}^{m}\sum_{i=1}^{\pi_{\nu}}
\binom{\pi_{\nu}+n-i}{\pi_{\nu}-i}\frac{\zeta_{\nu, i}}{\beta_{\nu}^{\pi_{\nu}+n+1-i}},~~ n> L_1-M_1.\end{array}\right. \lb{eq44}\eea
\end{small}
~~\\
\emph{Case 2}: $L_1< M_1$\\
 Here, in partial-fraction, we omit only  the first summation term of the right hand side of (\ref{eq43}). Then collecting the coefficients of $x^n$, we obtain $p_{n,b}^+$, which are given by
 \begin{small}
\bea \hspace*{-1cm}p_{n,b}^+=\sum\limits_{k=1}^{M_1-b-\chi}\frac{c_{k}}{\alpha^{n+1}_{k}}+\sum_{\nu=1}^{m}\sum_{i=1}^{\pi_{\nu}}
\binom{\pi_{\nu}+n-i}{\pi_{\nu}-i}\frac{\zeta_{\nu, i}}{\beta_{\nu}^{\pi_{\nu}+n+1-i}},~ n\geq 0.\lb{eq}\eea
\end{small}
\end{itemize}
This completes the extraction of the joint probabilities $p_{n,b}^+$.
\begin{remark}
One may feel the necessity of having the probabilities at vacation termination epoch i.e., $Q_n^+,~n\geq 0$, which can be easily extracted from $Q^+(x)$ and are given below.
\bea Q_n^+ = \displaystyle \left\{\begin{array}{r@{\mskip\thickmuskip}l}
&\sum_{i=0}^{n}\left(p_i^++\delta_{p}Q_i^+\right)h_{n-i}, \quad 0\leq n \leq a-1 \\
 & \sum_{i=0}^{a-1}\left(p_i^++\delta_{p}Q_i^+\right)h_{n-i},\quad n \geq a. \end{array}\right. \lb{eq46}\eea
\end{remark}
\begin{remark}
It should also be noted here that the queue length distribution at service completion epoch i.e., $p_n^+$ can be achieved by summing over $r$
 from $a$ to $b$ the joint probabilities $p^+_{n,r}$, without the requirement of further separate extraction from $P^+(x)$.
\end{remark}
This completes the determination of joint distribution of queue length and server content at service completion epoch as well as
the only queue length distribution at vacation termination epoch. Now in the next section, the probability distribution at arbitrary
slot have been acquired.
\section{Joint distribution of queue and server content at arbitrary slot}\lb{AE}
The steady-state queue and server content distribution at arbitrary slot plays an important role in computing several notable key performance measures.
A relationship between the probabilities at service completion/post transmission and arbitrary epochs has been generated and presented below.\\
\begin{thm}
The probabilities at service completion and arbitrary epochs i.e., $\left(p_{n,r}, p^+_{n,r}, p^+_{n}\right)$ $\left(Q_{n},Q^+_{n}\right)$ are related by
\begin{eqnarray}
p_{n,0} &=& \dfrac{1}{E^*}\sum_{m=0}^{n}Q^+_{m},\quad 0 \leq n \leq a-1 \lb{eq47}\\
p_{0,a} &=& \dfrac{1}{E^{*}}\biggl(p^+_{a}+Q^+_{a}-p^+_{0,a}+
\left(1-\delta_{p}\right)\sum_{m=0}^{a-1}Q^+_{m}\biggr) \lb{eq48}\\
p_{0,r} &=& \dfrac{1}{E^{*}}\biggl(p^+_{r}+ Q^+_{r} - p^+_{0,r}\biggr), \quad a+1 \leq r \leq b \lb{eq49}\end{eqnarray}
\begin{eqnarray}
p_{n,r} &=& p_{n-1,r} - \dfrac{p^+_{n,r}}{E^*},\quad n \geq 1, \quad a \leq r \leq b-1 \lb{eq50}\\
p_{n,b} &=& p_{n-1,b} + \dfrac{1}{E^*}\biggl(p^+_{n+b}+Q^+_{n+b}-p^+_{n,b}\biggr), \quad n \geq 1 \lb{eq51} \\
Q_{0} &=& \dfrac{1}{E^*}\biggl(p^+_{0}-\left(1-\delta_{p}\right)Q^+_{0}\biggr) \lb{eq52}\\
Q_{n} &=& Q_{n-1} + \dfrac{1}{E^*}\biggl(p^+_{n}-\left(1-\delta_{p}\right)Q^+_{n}\biggr), \quad 1 \leq n \leq a-1 \lb{eq53}\\
Q_{n} &=& Q_{n-1} - \dfrac{Q^+_{n}}{E^*}, \quad n \geq a \lb{eq54}
\end{eqnarray}
where
\begin{eqnarray}
E^* &=& \lambda \omega + \left(1-\delta_{p}\right)\sum_{i=0}^{a-1}\sum_{m=0}^{i} Q^+_{m},\nonumber
\\
\omega &=& \sum_{n=0}^{a-1}\biggl\{p^+_{n}E(V)+\left(1-\delta_{p}\right)Q^+_{n}s_{a}+\delta_{p} Q^+_{n}E(V)\biggr\}+\sum_{n=a}^{b}
\biggl(p^+_{n}+Q^+_{n}\biggr)s_{n} + \sum_{n=b+1}^{\infty}\biggl(p^+_{n}+Q^+_{n}\biggr)s_{b}\nonumber
\end{eqnarray}
\end{thm}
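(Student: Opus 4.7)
The plan is to derive each identity in the theorem from the steady-state equations (\ref{eq1})--(\ref{eq9}) together with the boundary-conversion formulas (\ref{eq17})--(\ref{eq20}). The common mechanism is as follows: for each equation indexed by the supplementary variable $u$, I would sum both sides over $u\ge 1$ (equivalently, let $z\to 1$ in the $z$-transformed versions (\ref{eq10})--(\ref{eq16})). This collapses every service/vacation kernel via $\sum_{u\ge1}s_r(u)=\sum_{u\ge1}v(u)=1$, and converts every $p_{n,r}(u+1)$ to $p_{n,r}-p_{n,r}(1)$. What remains is a linear balance between stationary probabilities $p_{n,r},Q_n$ and boundary probabilities $p_{n,r}(1),Q_n(1)$. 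I would then invoke (\ref{eq17})--(\ref{eq20}) to substitute $\bar\lambda p_{n,r}(1)+\lambda p_{n-1,r}(1)=\tau\,p^+_{n,r}$ and $\bar\lambda Q_n(1)+\lambda Q_{n-1}(1)=\tau\,Q^+_n$ throughout, so that only the completion probabilities $p^+_{n,r},Q^+_n$ and the single scalar $\tau$ appear.

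The dormant-state identity (\ref{eq47}) is derived directly from (\ref{eq1})--(\ref{eq2}), which hold without reference to the supplementary variable. Setting $\delta_p=0$, isolating $\lambda(p_{n,0}-p_{n-1,0})$ on the left, applying (\ref{eq20}), and telescoping yields $p_{n,0}=(\tau/\lambda)\sum_{m=0}^{n}Q^+_m$. Identities (\ref{eq48})--(\ref{eq49}) then follow by summing (\ref{eq3})--(\ref{eq4}) over $u$, identifying the collapsed source coefficient as $\tau(p^+_r+Q^+_r)$ via (\ref{eq18}) and (\ref{eq20}), using (\ref{eq17}) to rewrite the residual $\bar\lambda p_{0,r}(1)$ as $\tau p^+_{0,r}$, and (in the $r=a$ case) substituting (\ref{eq47}) for the $(1-\delta_p)\lambda p_{a-1,0}$ source. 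The recursions (\ref{eq50})--(\ref{eq51}) follow from (\ref{eq5})--(\ref{eq6}), and (\ref{eq52})--(\ref{eq54}) follow from (\ref{eq7})--(\ref{eq9}), by the same routine.

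The unifying step is the identification $\tau=\lambda/E^*$. For this I would plug the closed form $p_{n,0}=(\tau/\lambda)\sum_{m=0}^{n}Q^+_m$ into Lemma 4.2, which gives $\tau\omega=1-(1-\delta_p)\sum_{n=0}^{a-1}p_{n,0}$. Interchanging the order of summation on the right and solving algebraically for $\tau$ produces
\[
\tau=\frac{\lambda}{\lambda\omega+(1-\delta_p)\sum_{i=0}^{a-1}\sum_{m=0}^{i}Q^+_m}=\frac{\lambda}{E^*}.
\]
Once this identification is available, every previously derived identity can be rewritten with $1/E^*$ in place of $\tau/\lambda$, matching the statement verbatim.

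The main obstacle will be the careful bookkeeping of boundary terms. In (\ref{eq48}) the residual $\bar\lambda p_{0,a}(1)$ must combine correctly via (\ref{eq17}) with the collapsed source terms, and the $(1-\delta_p)\lambda p_{a-1,0}$ summand must be eliminated using the already-derived (\ref{eq47}) before the coefficient $1/E^*$ emerges cleanly. In (\ref{eq51}) the source terms sit at shifted indices $n+b$ and $n+b-1$, so the index matching must be performed carefully to extract the single compact combination $p^+_{n+b}+Q^+_{n+b}$. Apart from these two pain points the remaining computations are routine once the two substitution principles --- summation over $u$ and the Lemma's $\bar\lambda,\lambda$-to-$p^+\!/Q^+$ conversion --- are in place.
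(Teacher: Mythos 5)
Your proposal follows essentially the same route as the paper: the paper also sets $z=1$ in (\ref{eq10})--(\ref{eq16}) (equivalent to your summation over $u$), divides by $\tau$ and applies (\ref{eq17})--(\ref{eq20}), obtains (\ref{eq47}) by telescoping (\ref{eq1})--(\ref{eq2}) into (\ref{eq23}), and makes your key identification $\tau=\lambda/E^*$ by combining Lemma 4.2 with the closed form of $p_{n,0}$ (its equation (\ref{eq61})). The proposal is correct and matches the paper's argument in all essentials.
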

\begin{proof}[\textbf{Proof}]
Use of (\ref{eq23}) in (\ref{eq1}) - (\ref{eq2}), and then division by $\tau$ and use of (\ref{eq17}) - (\ref{eq20}) leads
\begin{eqnarray}
p_{0,0} &=& \dfrac{1-\left(1-\delta_{p}\right)\sum_{i=0}^{a-1}p_{i,0}}{\lambda \omega} Q^+_{0} \lb{eq55}\\
p_{n,0} &=& \dfrac{1-\left(1-\delta_{p}\right)\sum_{i=0}^{a-1}p_{i,0}}{\lambda \omega} \sum_{m=0}^{n} Q^+_{m},\quad 1\leq n \leq a-1. \lb{eq56}
\end{eqnarray}
Now dividing (\ref{eq56}) by (\ref{eq55}), we get
\begin{eqnarray}
p_{n,0} &=& \dfrac{p_{0,0}}{Q^+_{0}} \sum_{m=0}^{n} Q^+_{m}, \quad 0 \leq n \leq a-1. \lb{eq57}
\end{eqnarray}
Use of (\ref{eq57}) in (\ref{eq55}) and (\ref{eq56}) yields
\begin{eqnarray}
p_{n,0} &=& \dfrac{\sum_{m=0}^{n}Q^+_{m}}{\lambda \omega + \left(1-\delta_{p}\right)\sum_{i=0}^{a-1}\sum_{m=0}^{i} Q^+_{m}}, \quad 0 \leq n \leq a-1. \lb{eq58}
\end{eqnarray}
which is the desired result of (\ref{eq47}).\\
Putting $z=1$ in (\ref{eq10}) and (\ref{eq11}), use of (\ref{eq23}),  and division by $\tau$ gives
\begin{eqnarray}
p_{0,a} &=& \dfrac{1-\left(1-\delta_{p}\right)\sum_{i=0}^{a-1}p_{i,0}}{\lambda \omega} \biggl(p^+_{a}+Q^+_{a}+ \left(1-\delta_{p}\right)\sum_{m=0}^{a-1}Q^+_{m}-p^+_{0,a}\biggr) \lb{eq59}\\
p_{0,r} &=& \dfrac{1-\left(1-\delta_{p}\right)\sum_{i=0}^{a-1}p_{i,0}}{\lambda \omega}\biggl(p^+_{r}+Q^+_{r}-p^+_{0,r}\biggr),
\quad a+1 \leq r \leq b  \lb{eq60}
\end{eqnarray}
As
\begin{eqnarray}
1- \left(1-\delta_{p}\right)\sum_{i=0}^{a-1}p_{i,0} &=& \dfrac{\lambda \omega}{\lambda \omega + (1-\delta_{p})\sum_{i=0}^{a-1}\sum_{m=0}^{i}Q^+_{m}}, \lb{eq61}
\end{eqnarray}
Using (\ref{eq61}) in (\ref{eq59}) and (\ref{eq60}), we get the required results of (\ref{eq48}) and (\ref{eq49}).\\
Substituting $z=1$ in (\ref{eq12}) - (\ref{eq16}) and then dividing by $\tau$, using the similar approach, we get our desired results of (\ref{eq50}) - (\ref{eq54}).
\end{proof}
\section{Marginal distributions and performance measures}\lb{MP}
From the application perspective of the concerned model, the marginal distributions along with the performance measures are the key ingredients for the system designers/vendors. We first present the following utmost marginal distributions in terms of known distributions :
\begin{itemize}
\item  the distribution of the number of customers in the system (which includes total the number of customers in the queue and with the server), $p_n^{sys},~~n\geq 0$,
\begin{align*}
p_n^{sys} = \left\{\begin{array}{r@{\mskip\thickmuskip}l}
&\left(1-\delta_{p}\right)p_{n,0} +Q_n ~~,\hspace{2.0cm}0\leq n \leq a-1\\
& \sum_{r=a}^{min(b,n)}p_{n-r,r}+Q_n~~,\hspace{1.3cm} a\leq n \leq b,\vspace{0.1cm}\\
&\sum_{r=a}^{b}p_{n-r,r}\hspace{3.3cm} n\geq b+1.
\end{array}\right.
\end{align*}

\item the distribution of the number of customers in the queue $p_n^{queue},~~n\geq 0$,
\begin{align*}
p_n^{queue} =\left\{\begin{array}{r@{\mskip\thickmuskip}l}
& \left(1-\delta_{p}\right)p_{n,0}+\sum_{r=a}^{b}p_{n,r}+Q_n~,\hspace{1.3cm}0\leq n \leq a-1\\
&\sum_{r=a}^{b}p_{n,r}+Q_n\hspace{4.0cm} n\geq a.
\end{array}\right.
\end{align*}

\item the probability that the server is in busy state ($P_{busy}$), in the vacation state ($Q_{vac}$), and in the dormant state ($P_{dor}$) are given by $P_{busy}=\sum_{n=0}^{\infty}\sum_{r=a}^{b}p_{n,r}$,~ $Q_{vac}=\sum_{n=0}^{\infty}Q_n$,~ $P_{dor}=\sum_{n=0}^{a-1}p_{n,0}$.

\item the conditional distribution of the number of customers undergoing service with the server given that the server is busy,
$p_r^{ser}$, ($a \leq r \leq b $)
\begin{align*}
p_r^{ser} = \sum_{n=0}^{\infty}p_{n,r}/P_{busy} ~~(a\leq r \leq b).
\end{align*}

\end{itemize}
\hspace*{0.3cm}Performance measures are very crucial from the application point of view as it improves the efficiency of the system. Although some performance measures can be derived from the pgf, here we obtain several pivotal performance measures using completely known state probabilities and marginal distributions
in a very simplified manner and are given by:
\begin{itemize}
\item average number of customers waiting in the queue at any arbitrary time ($L_q)=\sum_{n=0}^{\infty}n p_n^{queue}$,
\item average number in the system ($L)=\sum_{n=0}^{\infty}n p_n^{sys}$,
\item average number of customers with the server ($L_s)=\sum_{r=a}^{b}r p_r^{ser}$,
\item average waiting time of a customer in the queue $(W_q)=\frac{L_q}{\la}$ as well as in the system $(W)=\frac{L}{\la}$.
\end{itemize}
\section{Numerical illustration}\lb{NR}
The principal objective of this section is to manifest the feasibility and applicability of the proposed method and results by the inclusion of numerical examples. Although we have generated extensive computations works, only a few of them are appended here due to lack of space. All the calculations were performed using Maple 15 on PC having configuration Intel (R) Core (TM) i5-3470 CPU Processor @ 3.20 GHz with 4.00 GB of RAM, and the results are presented in the tabular form in which some relevant performance measures are also included.\\
\textbf{Example 1:} Transmission errors are the inescapable part of the communication channel as it may occur due to electrical faults, bad weather, fading channel etc. In order to monitor this error, discrete phase type distribution (PH$_D$) significantly fits as a powerful tool. It covers a wide range of almost
all relevant discrete distributions such as geometric distribution, negative binomial distribution etc. On account of this, in this example we consider service time to follow PH$_D$ distribution with the representation ($\boldsymbol{\beta}_i,~\textbf{T}_i$) for $a\leq i \leq b$, where $\boldsymbol{\beta}_i$ is a row vector of order $\nu$ and $\textbf{T}_i$ is a square matrix of order $\nu$. The associated  pgf is $z \boldsymbol{\beta}_i\left(I-z\textbf{T}_i\right)^{-1}\eta_i$ with $\eta_i+\textbf{T}_ie=e$, where $I$ is the identity matrix of appropriate order and $e$ is the column vector. Consequently,
$K^{(i)}(x)=(1-\la+\la x) \boldsymbol{\beta}_i\left(I-(1-\la+\la x)\textbf{T}_i\right)^{-1}\eta_i$.  Here we consider $a=5,~b=10,~\la=0.5,~\rho=0.693181$, and PH$_D$ distribution for different batch-size is provided in the table \ref{tb1}. The vacation time follows geometric distribution with pmf $(v_j)=\eta~(1-\eta)^{j-1},~~j\geq 1$ with $\eta=0.3$, pgf $=\dfrac{\eta z}{1-(1-\eta)z}$, mean ($\bar v$)=3.333333 and hence $H(x)=\dfrac{0.15+0.15x}{0.65-0.35x}$. The joint distribution at service completion and arbitrary epochs along with some performance measures are displayed in Tables [\ref {tb2}-\ref {tb5}] .
\begin{table}[h!]
$\vspace{0.1cm}$
\centering
\caption{PH$_D$ representation for service time of batch size $r$}
\lb{tb1}
\begin{tiny}
\begin{tabular}{|c|c|c|c|}\hline
$r$  &  $\boldsymbol{\beta}_r$  &  $\textbf{T}_r$   &  $E(T_r)=s_r$\\\hline
&&&\\
     5&    $\left(
     \begin{array}{ccc}
     0.3 & 0.4 & 0.3  \\
     \end{array}
     \right)$         &  $\left(
                                                        \begin{array}{ccc}
                                                          0.6 & 0.2 & 0.2  \\
                                                          0.2 & 0.6 & 0.1 \\
                                                          0.2 & 0 & 0.7  \\
                                                        \end{array}
                                                      \right)$
             &15.750000\\
     &&&\\
     6&    $\left(
     \begin{array}{ccc}
     0.6 & 0.2 & 0.2  \\
     \end{array}
     \right)$         &   $\left(
                                                         \begin{array}{ccc}
                                                          0.4 & 0.4 & 0.1  \\
                                                          0.0 & 0.8 & 0.1 \\
                                                          0.2 & 0.3 & 0.6  \\
                                                        \end{array}
                                                       \right)$
            &18.000000\\
     &&&\\
     7&    $\left(
     \begin{array}{ccc}
     0.5 & 0.3 & 0.2  \\
     \end{array}
     \right)$        &    $\left(
                                                          \begin{array}{ccc}
                                                          0.7 & 0.1 & 0.1  \\
                                                          0.1 & 0.6 & 0.1 \\
                                                          0.1 & 0.1 & 0.7  \\
                                                        \end{array}
                                                        \right)$
           &8.000000\\
     &&&\\
     8&    $\left(
     \begin{array}{ccc}
     0.4 & 0.2 & 0.4  \\
     \end{array}
     \right)$ & $\left(
                                                 \begin{array}{ccc}
                                                          0.5 & 0.3 & 0.1  \\
                                                          0.2 & 0.5 & 0.1 \\
                                                          0.2 & 0.4 & 0.3  \\
                                                        \end{array}
                                               \right)$
      & 6.988764\\
      &&&\\
     9&    $\left(
     \begin{array}{ccc}
     0.25 & 0.5 & 0.5  \\
     \end{array}
     \right)$ & $\left(
                                                 \begin{array}{ccc}
                                                          0.1 & 0.8 & 0.1  \\
                                                          0.0 & 0.7 & 0.1 \\
                                                          0.1 & 0.1 & 0.7  \\
                                                        \end{array}
                                               \right)$
      & 7.131147\\
      &&&\\
     10&   $\left(
     \begin{array}{ccc}
     0.3 & 0.3 & 0.4  \\
     \end{array}
     \right)$  &                                  $\left(
                                                 \begin{array}{ccc}
                                                          0.6 & 0.2 & 0.1  \\
                                                          0.4 & 0.3 & 0.1 \\
                                                          0.2 & 0.0 & 0.8  \\
                                                        \end{array}
                                               \right)$
      & 13.863636\\\hline

\end{tabular}
\end{tiny}
\end{table}

\newpage

\begin{table}[h!]
\centering
\begin{tiny}

\caption{Probability distribution at service completion epoch for single vacation}\lb{tb2}$\vspace{0.03cm}$
\begin{tabular}{|c|c|c|c|c|c|c|c|c|} \hline
$n$& $p_{n,5}^+$& $p_{n,6}^+$& $p_{n,7}^+$& $p_{n,8}^+$& $p_{n,9}^+$& $p_{n,10}^+$& $p_n^+$&$Q_n^+$\\\hline
0&0.014648&0.002924&0.004521&0.003994&0.003179&0.002032&0.031298&0.007223\\
1&0.026816&0.005430&0.008134&0.007376&0.006159&0.005402&0.059317&0.024801\\
2&0.022733&0.004716&0.006452&0.005946&0.005253&0.007744&0.052844&0.039238\\
3&0.019845&0.004181&0.005053&0.004472&0.003974&0.009505&0.047030&0.044177\\
4&0.017456&0.003731&0.003935&0.003325&0.002963&0.010823&0.042233&0.044387\\
5&0.015384&0.003338&0.003056&0.002467&0.002200&0.011786&0.038231&0.033647\\
25&0.001244&0.000368&0.000019&0.000006&0.000005&0.007315&0.008957&0.000000\\
50&0.000054&0.000023&0.000000&0.000000&0.000000&0.001431&0.001508&0.000000\\
75&0.000002&0.000001&0.000000&0.000000&0.000000&0.000234&0.000237&0.000000\\
100&0.000000&0.000000&0.000000&0.000000&0.000000&0.000036&0.000036&0.000000\\
125&0.000000&0.000000&0.000000&0.000000&0.000000&0.000005&0.000005&0.000000\\
$\geq$150&0.000000&0.000000&0.000000&0.000000&0.000000&0.000000&0.000000&0.000000\\\hline
\end{tabular}

\caption{Probability distribution at arbitrary slot for single vacation }\lb{tb3}$\vspace{0.03cm}$
\begin{tabular}{|c|c|c|c|c|c|c|c|c|c|c|} \hline
$n$& $p_{n,0}$ & $p_{n,5}$ & $p_{n,6}$& $p_{n,7}$& $p_{n,8}$& $p_{n,9}$&$p_{n,10}$&$p_n^{queue}$&$Q_n$ \\\hline
0&0.001188&0.035713&0.008233&0.006116&0.005046&0.004414&0.004062&0.068733&0.003961\\
1&0.005268&0.031300&0.007339&0.004777&0.003832&0.003400&0.007160&0.072716&0.009640\\
2&0.011724&0.027560&0.006563&0.003716&0.002854&0.002536&0.009545&0.076377&0.011879\\
3&0.018993&0.024295&0.005875&0.002884&0.002117&0.001882&0.011362&0.079756&0.012348\\
4&0.026296&0.021423&0.005261&0.002237&0.001571&0.001395&0.012721&0.082899&0.011995\\
5&&0.018892&0.004712&0.001734&0.001164&0.001033&0.013704&0.047697&0.006458\\
25&&0.001528&0.000519&0.000010&0.000003&0.000002&0.008193&0.010255&0.000000\\
50&&0.000066&0.000033&0.000000&0.000000&0.000000&0.001596&0.001695&0.000000\\
75&&0.000002&0.000002&0.000000&0.000000&0.000000&0.000260&0.000264&0.000000\\
100&&0.000000&0.000000&0.000000&0.000000&0.000000&0.000040&0.000040&0.000000\\
$\geq$150&&0.000000&0.000000&0.000000&0.000000&0.000000&0.000000&0.000000&0.000000\\\hline
\multicolumn{10}{|c|}{$L$=18.439001,~~ $L_q$=11.796469,~~ $L_{s}$=7.943251,}\\
\multicolumn{10}{|c|}{$P_{busy}$=0.872710,~~~ $W$=36.878002,~~ $W_q$=23.592938}\\\hline
\end{tabular}
\end{tiny}
\end{table}

\begin{table}[h!]
\centering
\begin{tiny}
\caption{ Probability distribution at service completion epoch for multiple vacation}\lb{tb4}$\vspace{0.03cm}$

\begin{tabular}{|c|c|c|c|c|c|c|c|c|} \hline
$n$& $p_{n,5}^+$& $p_{n,6}^+$& $p_{n,7}^+$& $p_{n,8}^+$& $p_{n,9}^+$& $p_{n,10}^+$& $p_n^+$&$Q_n^+$\\\hline
0&0.006982&0.003847&0.005085&0.003905&0.002780&0.001638&0.024237&0.007271\\
1&0.012783&0.007145&0.009149&0.007212&0.005385&0.004275&0.045949&0.028328\\
2&0.010836&0.006206&0.007257&0.005814&0.004593&0.005985&0.040691&0.054321\\
3&0.009459&0.005501&0.005684&0.004373&0.003474&0.007211&0.035702&0.077241\\
4&0.008320&0.004911&0.004426&0.003251&0.002590&0.008093&0.031591&0.097430\\
5&0.007333&0.004393&0.003438&0.002412&0.001923&0.008712&0.028211&0.082236\\
25&0.000593&0.000484&0.000021&0.000006&0.000005&0.005163&0.006272&0.000000\\
50&0.000025&0.000030&0.000000&0.000000&0.000000&0.001019&0.001075&0.000000\\
75&0.000001&0.000002&0.000000&0.000000&0.000000&0.000168&0.000171&0.000000\\
100&0.000000&0.000000&0.000000&0.000000&0.000000&0.000026&0.000026&0.000000\\
125&0.000000&0.000000&0.000000&0.000000&0.000000&0.000004&0.000004&0.000000\\
$\geq$150&0.000000&0.000000&0.000000&0.000000&0.000000&0.000000&0.000000&0.000000\\\hline
\end{tabular}

\caption{Probability distribution at arbitrary slot for multiple vacation }\lb{tb5}$\vspace{0.03cm}$

\begin{tabular}{|c|c|c|c|c|c|c|c|c|c|} \hline
$n$&  $p_{n,5}$ & $p_{n,6}$& $p_{n,7}$& $p_{n,8}$& $p_{n,9}$&$p_{n,10}$&$p_n^{queue}$&$Q_n$ \\\hline
0&0.022959&0.014610&0.009278&0.006653&0.005205&0.004416&0.068500&0.005379\\
1&0.020123&0.013024&0.007248&0.005053&0.004010&0.007570&0.072603&0.015575\\
2&0.017718&0.011647&0.005637&0.003763&0.002991&0.009875&0.076236&0.024605\\
3&0.015619&0.010427&0.004376&0.002792&0.002220&0.011561&0.079522&0.032527\\
4&0.013772&0.009337&0.003393&0.002071&0.001645&0.012774&0.082530&0.039538\\
5&0.012145&0.008362&0.002631&0.001536&0.001218&0.013619&0.060802&0.021289\\
25&0.000983&0.000923&0.000016&0.000004&0.000003&0.007798&0.009727&0.000000\\
50&0.000042&0.000058&0.000000&0.000000&0.000000&0.001533&0.001633&0.000000\\
75&0.000004&0.000000&0.000000&0.000000&0.000000&0.000252&0.000256&0.000000\\
100&0.000000&0.000000&0.000000&0.000000&0.000000&0.000039&0.000039&0.000000\\
$\geq$150&&0.000000&0.000000&0.000000&0.000000&0.000000&0.000000&0.000000\\\hline
\multicolumn{9}{|c|}{$L$=18.915229,~~ $L_q$=12.142686,~~ $L_{s}$=7.760356,}\\
\multicolumn{9}{|c|}{$P_{busy}$=0.836248,~~~$W$=37.830459,~~ $W_q$=24.285373 }\\\hline
\end{tabular}
\end{tiny}
\end{table}

\newpage
\textbf{Example 2:} In this example, the service time has been considered as negative binomial (NB) distribution with pmf
\bea &&s_i(n)=\binom{n-1}{r-1}\mu_i^r(1-\mu_i)^{n-r},~n=r,r+1,r+2,\ldots ~ \mbox{which consequently leads}\nn\\&&~ K^{(i)}(x)=\left\{\frac{\mu_i(1-\la+\la x)}{1-(1-\mu_i)(1-\la+\la x)}\right\}^r.\nn\eea
In particular, here $r=3,~\la=0.15,~a=12,~b=20$ and service rates are taken as follows: $\mu_{12}=0.346153$, $\mu_{13}=0.321428$, $\mu_{14}=0.300000$, $\mu_{15}=0.281250$, $\mu_{16}=0.264705$, $\mu_{17}=0.250000$, $\mu_{18}=0.236842$, $\mu_{19}=0.225000$, $\mu_{20}=0.214285$. On the other hand, the
vacation time follows PH$_D$ distribution with $\boldsymbol{\beta}=(0.2,0.2,0.6)$, $\textbf{T}=\left(
                                                                           \begin{array}{ccc}
                                                                             0.5 & 0.3 & 0.2 \\
                                                                              0.4& 0.3 & 0.1 \\
                                                                              0.3 & 0.2 & 0.1 \\
                                                                           \end{array}
                                                                         \right)
$, mean=17.736842 so that the traffic intensity of the system $\rho=0.315000$. The joint distribution at service completion and arbitrary epochs along with some performance measures are
displayed in Tables [\ref {tb6}-\ref {tb9}].
\begin{table}[h!]
\centering
\begin{tiny}
    \caption{Probability distribution at service completion epoch for single vacation}\lb{tb6}$\vspace{0.03cm}$

\begin{tabular}{|c|c|c|c|c|c|c|c|c|c|c|c|} \hline
$n$& $p_{n,12}^+$& $p_{n,13}^+$& $p_{n,14}^+$& $p_{n,15}^+$& $p_{n,16}^+$& $p_{n,17}^+$&$p_{n,18}^+$&$p_{n,19}^+$&$p_{n,20}^+$& $p_n^+$&$Q_n^+$\\\hline
0&0.142104&0.000858&0.000564&0.000372&0.000245&0.000162&0.000107&0.000071&0.000047&0.144530&0.033265\\
1&0.169353&0.001073&0.000737&0.000506&0.000346&0.000237&0.000162&0.000110&0.000109&0.172633&0.071484\\
2&0.104665&0.000706&0.000513&0.000369&0.000265&0.000189&0.000134&0.000094&0.000144&0.107079&0.085673\\
3&0.046869&0.000340&0.000263&0.000200&0.000151&0.000113&0.000083&0.000061&0.000147&0.048227&0.078642\\
4&0.017560&0.000137&0.000114&0.000092&0.000074&0.000057&0.000044&0.000034&0.000130&0.018242&0.063170\\
5&0.005904&0.000050&0.000044&0.000038&0.000032&0.000026&0.000021&0.000017&0.000106&0.006238&0.047482\\
8&0.000157&0.000002&0.000002&0.000002&0.000002&0.000002&0.000002&0.000002&0.000045&0.000216&0.017674\\
12&0.000000&0.000000&0.000000&0.000000&0.000000&0.000000&0.000000&0.000000&0.000011&0.000011&0.004487\\
16&0.000000&0.000000&0.000000&0.000000&0.000000&0.000000&0.000000&0.000000&0.000003&0.000003&0.001132\\
30&0.000000&0.000000&0.000000&0.000000&0.000000&0.000000&0.000000&0.000000&0.000000&0.000000&0.000009\\
$\geq$40&0.000000&0.000000&0.000000&0.000000&0.000000&0.000000&0.000000&0.000000&0.000000&0.000000&0.000000\\\hline
\end{tabular}
$\vspace{0.03cm}$
    \caption{Probability distribution at arbitrary slot for single vacation }\lb{tb7}$\vspace{0.01cm}$
\begin{tabular}{|c|c|c|c|c|c|c|c|c|c|c|c|c|c|} \hline
$n$& $p_{n,0}$ & $p_{n,12}$ & $p_{n,13}$& $p_{n,14}$& $p_{n,15}$& $p_{n,16}$&$p_{n,17}$&$p_{n,18}$&$p_{n,19}$&$p_{n,20}$&$Q_n$&$p_n^{queue}$ \\\hline
0&0.005511&0.057487&0.000386&0.000281&0.000203&0.000147&0.000106&0.000076&0.000055&0.000039&0.018435&0.082726\\
1&0.017355&0.029427&0.000208&0.000158&0.000120&0.000090&0.000067&0.000049&0.000036&0.000055&0.035194&0.082759\\
2&0.031550&0.012086&0.000091&0.000073&0.000058&0.000046&0.000035&0.000027&0.000021&0.000055&0.038741&0.082783\\
3&0.044580&0.004320&0.000035&0.000030&0.000025&0.000021&0.000017&0.000013&0.000011&0.000048&0.033702&0.082802\\
4&0.055047&0.001410&0.000012&0.000011&0.000010&0.000009&0.000008&0.000006&0.000005&0.000038&0.026258&0.082814\\
5&0.062914&0.000432&0.000004&0.000004&0.000003&0.000003&0.000003&0.000002&0.000002&0.000029&0.019425&0.082826\\
6&0.068641&0.000126&0.000001&0.000001&0.000001&0.000001&0.000001&0.000001&0.000001&0.000001&0.014032&0.082807\\
7&0.072750&0.000035&0.000000&0.000000&0.000000&0.000000&0.000000&0.000000&0.000000&0.000015&0.010030&0.082830\\
8&0.075679&0.000009&0.000000&0.000000&0.000000&0.000000&0.000000&0.000000&0.000000&0.000011&0.007138&0.082837\\
9&0.077760&0.000003&0.000000&0.000000&0.000000&0.000000&0.000000&0.000000&0.000000&0.000008&0.005070&0.082841\\
10&0.079238&0.000000&0.000000&0.000000&0.000000&0.000000&0.000000&0.000000&0.000000&0.000005&0.003598&0.082841\\
11&0.080286&0.000000&0.000000&0.000000&0.000000&0.000000&0.000000&0.000000&0.000000&0.000004&0.002553&0.082843\\
12&&0.000000&0.000000&0.000000&0.000000&0.000000&0.000000&0.000000&0.000000&0.000003&0.001809&0.001812\\
20&&0.000000&0.000000&0.000000&0.000000&0.000000&0.000000&0.000000&0.000000&0.000000&0.000115&0.000115\\
30&&0.000000&0.000000&0.000000&0.000000&0.000000&0.000000&0.000000&0.000000&0.000000&0.000004&0.000004\\
$\geq$40&&0.000000&0.000000&0.000000&0.000000&0.000000&0.000000&0.000000&0.000000&0.000000&0.000000&0.000000\\\hline
\multicolumn{13}{|c|}{$L$=6.866692,~~ $L_q$=5.556901,~~ $L_{s}$=12.095813,}\\
\multicolumn{13}{|c|}{$P_{busy}$=0.108284,~~~ $W$=45.777950,~~ $W_q$=37.046006}\\\hline
\end{tabular}
\end{tiny}
\end{table}

\begin{table}[h!]
\centering
\begin{tiny}
    \caption{ Probability distribution at service completion epoch for multiple vacation}\lb{tb8}$\vspace{0.03cm}$

\begin{tabular}{|c|c|c|c|c|c|c|c|c|c|c|c|} \hline
$n$& $p_{n,12}^+$& $p_{n,13}^+$& $p_{n,14}^+$& $p_{n,15}^+$& $p_{n,16}^+$& $p_{n,17}^+$&$p_{n,18}^+$&$p_{n,19}^+$&$p_{n,20}^+$& $p_n^+$&$Q_n^+$\\\hline
0&0.014508&0.009600&0.006318&0.004163&0.002747&0.001816&0.001202&0.000797&0.000529&0.041680&0.012462\\
1&0.017291&0.012009&0.008260&0.005665&0.003878&0.002652&0.001812&0.001237&0.001219&0.054023&0.031604\\
2&0.010686&0.007896&0.005740&0.004139&0.002966&0.002114&0.001500&0.001061&0.001612&0.037714&0.046955\\
3&0.004785&0.003799&0.002944&0.002247&0.001695&0.001266&0.000938&0.000689&0.001646&0.020009&0.055918\\
4&0.001792&0.001538&0.001276&0.001035&0.000825&0.000647&0.000501&0.000384&0.001458&0.009456&0.060383\\
5&0.000602&0.000560&0.000499&0.000431&0.000363&0.000300&0.000243&0.000194&0.001187&0.004379&0.062482\\
10&0.000001&0.000002&0.000003&0.000003&0.000003&0.000003&0.000003&0.000003&0.000260&0.000281&0.064591\\
15&0.000000&0.000000&0.000000&0.000000&0.000000&0.000000&0.000000&0.000000&0.000047&0.000047&0.017898\\
20&0.000000&0.000000&0.000000&0.000000&0.000000&0.000000&0.000000&0.000000&0.000008&0.000008&0.003202\\
30&0.000000&0.000000&0.000000&0.000000&0.000000&0.000000&0.000000&0.000000&0.000000&0.000000&0.000102\\
$\geq$45&0.000000&0.000000&0.000000&0.000000&0.000000&0.000000&0.000000&0.000000&0.000000&0.000000&0.000000\\\hline
\end{tabular}
\end{tiny}
\end{table}
\newpage
\begin{table}
\centering
\begin{tiny}
    \caption{Probability distribution at arbitrary slot for multiple vacation }\lb{tb9}$\vspace{0.03cm}$
\begin{tabular}{|c|c|c|c|c|c|c|c|c|c|c|c|c|} \hline
$n$ & $p_{n,12}$ & $p_{n,13}$& $p_{n,14}$& $p_{n,15}$& $p_{n,16}$&$p_{n,17}$&$p_{n,18}$&$p_{n,19}$&$p_{n,20}$&$Q_n$& $p_n^{queue}$\\\hline
0&0.014370&0.010580&0.007706&0.005590&0.004045&0.002920&0.002104&0.001514&0.001087&0.016909&0.066825\\
1&0.007356&0.005708&0.004355&0.003292&0.002471&0.001844&0.001369&0.001012&0.001516&0.038825&0.067748\\
2&0.003021&0.002505&0.002026&0.001613&0.001268&0.000986&0.000760&0.000581&0.001516&0.054125&0.068401\\
3&0.001080&0.000964&0.000832&0.000701&0.000580&0.000473&0.000380&0.000302&0.001312&0.062244&0.068868\\
4&0.000352&0.000340&0.000314&0.000281&0.000246&0.000210&0.000176&0.000146&0.001049&0.066081&0.069195\\
5&0.000108&0.000113&0.000112&0.000106&0.000098&0.000088&0.000078&0.000067&0.000800&0.067859&0.069429\\
10&0.000000&0.000000&0.000000&0.000000&0.000000&0.000000&0.000000&0.000000&0.000159&0.069735&0.069894\\
15&0.000000&0.000000&0.000000&0.000000&0.000000&0.000000&0.000000&0.000000&0.000028&0.017674&0.017702\\
20&0.000000&0.000000&0.000000&0.000000&0.000000&0.000000&0.000000&0.000000&0.000005&0.003162&0.003167\\
30&0.000000&0.000000&0.000000&0.000000&0.000000&0.000000&0.000000&0.000000&0.000000&0.000101&0.000101\\
$\geq$45&0.000000&0.000000&0.000000&0.000000&0.000000&0.000000&0.000000&0.000000&0.000000&0.000000&0.000000\\\hline
\multicolumn{12}{|c|}{$L$=8.628551,~~ $L_q$=7.060092,~~ $L_{s}$=14.658480,}\\
\multicolumn{12}{|c|}{$P_{busy}$=0.107000,~~~ $W$=57.523677,~~ $W_q$=47.067284}\\\hline
\end{tabular}
\end{tiny}
\end{table}
After the tabular representation of the numerical results, we now turn our focus to study the behavior of the system through some graphical representation. In order to show the significance of our proposed model, we figure out a comparison between batch-size dependent and independent service policies by considering two cases, viz. \\
\emph{Case 1}: The service times of the batches vary with the size of the batch undergoing service.\\
\emph{Case 2}: The mean service times of the batches are the same irrespective of the size of the batches. The same service rates for all the batches are the weighted average of the service rates of Case 1. The weighted average is calculated by the formula $\frac{\sum_{r=a}^{b}r\mu_r}{\sum_{r=a}^{b}r}$. \\
\hspace*{0.3cm}For figures \ref{fig1} - \ref{fig3}, the input parameters are taken as: service time follows NB distribution, vacation time follows geometric distribution, $a=8,~b=15$, and service rates for two cases are precisely given in Table \ref{tb10}.
\begin{table}[h!]
\centering
\caption{Service rates for Case 1 and Case 2}\lb{tb10}
\begin{tiny}
\begin{tabular}{|c|c|c|}\hline
   $r$ & $\mu_r$ (Case 1) & $\mu$ (Case 2) \\\hline
   8& 0.277778 & 0.199378 \\
   9& 0.250000 & 0.199378 \\
   10& 0.227273& 0.199378 \\
   11& 0.208333 & 0.199378 \\
   12& 0.192307 & 0.199378 \\
   13& 0.178571 &  0.199378\\
   14& 0.166666 &  0.199378\\
   15& 0.156250 & 0.199378 \\
  \hline
\end{tabular}
\end{tiny}
\end{table}
In figures \ref{fig1} and \ref{fig2}, the average number of customers in the queue at arbitrary slot ($L_q$) and the probability that the server is busy ($P_{busy}$) are depicted versus the traffic load $\rho$, respectively. From the figure \ref{fig1} and \ref{fig2}, it is easily visible that $L_q$ and $P_{busy}$ are higher for the Case 2 as compared to Case 1 which suggests that batch-size-dependent service time is more significant rather than batch-size-independent service policy. Also it may be noted here that $L_q$ is slightly greater in case of multiple vacation which is in the expected line as the availability of the server in case of single vacation is more compared to multiple vacation. Similar type of conclusion has been drawn in Gupta et al. \cite{gupta2019finite}.\\
\hspace*{0.3cm}From the figure \ref{fig2}, one may observe that $P_{busy}$ is higher for single vacation than the corresponding multiple vacation for Case 2 and both of them become the equal and stable as $\rho$ increases. This behavior of the system is also in the expected line and Gupta et al. \cite{gupta2019finite} and Samanta et al. \cite{samanta2007discrete} pointed out the similar type of observation.\\
\hspace*{0.3cm}In figure \ref{fig3}, we sketch the average waiting time of a customer in the system for different values of $\la$. One can notice that waiting time is less in Case 1 compared to Case 2, which eventually justifies the potentiality of batch-size-dependent service policy from the application perspective. Moreover, it may be pointed out that a customer has to wait more time in the system in case of multiple vacation which is quite obvious behavior of the system.

\begin{figure}[h!]
\begin{center}
\includegraphics[scale=0.4]{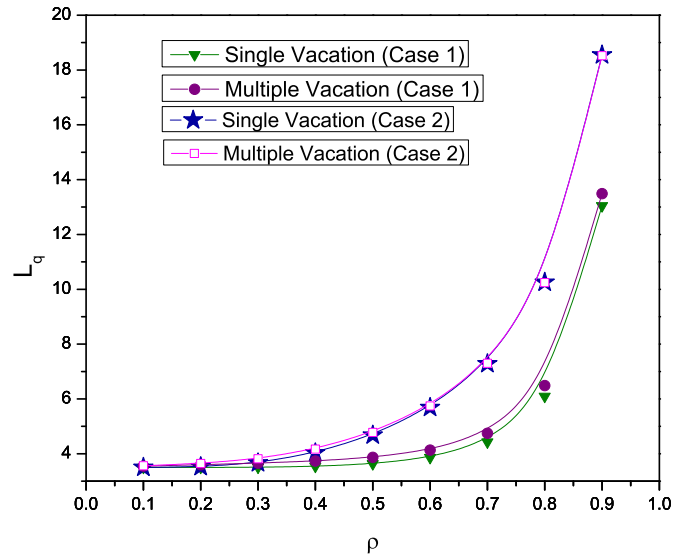}
\end{center}
\caption{Number of customers in the queue versus traffic intensity }\lb{fig1}
\end{figure}

\begin{figure}[h!]
\begin{center}
\includegraphics[scale=0.4]{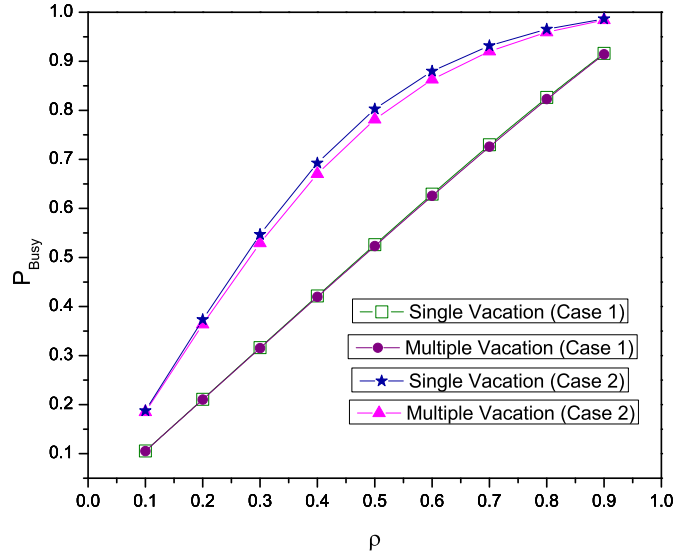}
\end{center}
\caption{Busy probability of the server versus traffic intensity }\lb{fig2}
\end{figure}

\begin{figure}[h!]
\begin{center}
\includegraphics[scale=0.4]{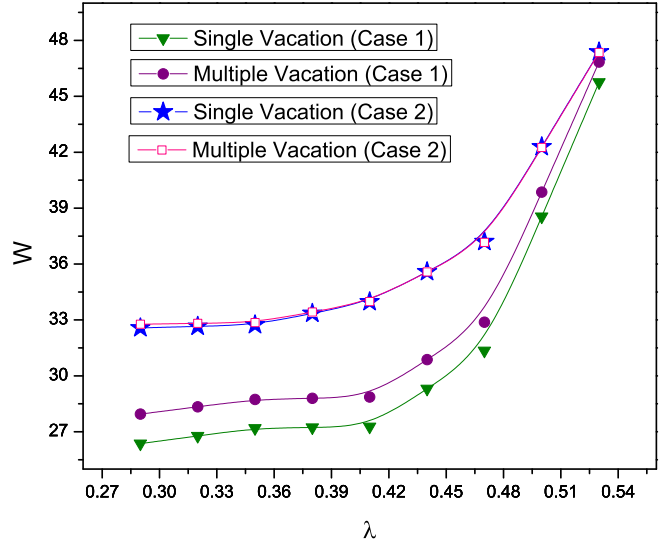}
\end{center}
\caption{ Waiting time of a customer in the system versus arrival rate}\lb{fig3}
\end{figure}
\newpage
\subsection{System cost optimization}
It is frequently observed that the system designers or vendors want to minimize the total system cost at the pre-implantation stage. Keeping this in mind, in this section, our foremost concern is to demonstrate a comparison between the batch-size-dependent (the present model, (Case 1)) and batch-size-independent model (Case 2) through the cost of the corresponding models. We first associate some cost to the system characteristics viz.,

\begin{itemize}

\item $C_w$ be the waiting/holding cost per customer per unit time in the queue.

\item $C_{idle}$ be the idleness cost of the server per unit time.

\item $C_{operating}$ be the operating (serving/transmission) cost per unit time.

\end{itemize}
Hence in the long run the total system cost ($TSC$) is presented by:
\bea TSC=C_w~.~ L_q + C_{idle}~.~\biggl((1-\delta_{p})P_{dor}+Q_{vac}\biggr) + C_{operating}~.~L_s \lb{mmor1}\eea
Determination of the optimal value of the cost function through some conventional optimization techniques is very difficult as the analytic expressions of $L_q$, $P_{dor},~Q_{vac}$, $L_s$ are not in our grip. On account of this, from the expression presented in (\ref{mmor1}), the associated cost of the system is calculated numerically. We consider the cost parameter as : $C_w=5$ unit, $C_{idle}=2$ unit, $C_{operating}=1$ unit, and other parameters are the same as taken for figures \ref{fig1} - \ref{fig3}. \\

\begin{figure}[h!]
\begin{center}
\includegraphics[scale=0.4]{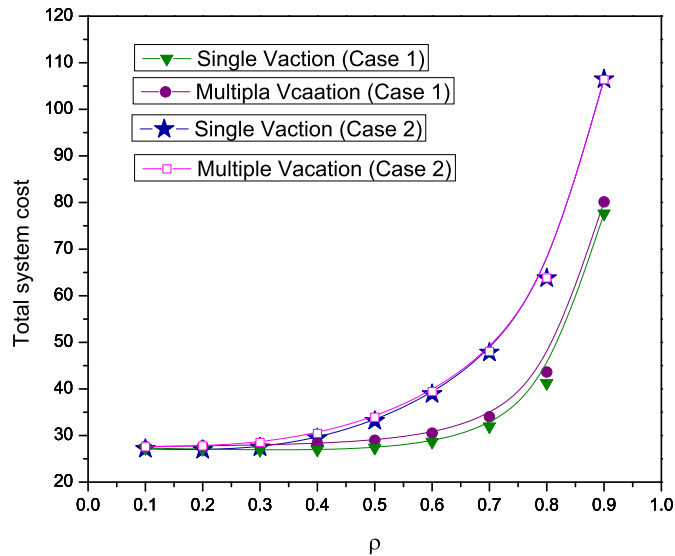}
\end{center}
\caption{Total system cost versus traffic intensity }\lb{fig4}
\end{figure}

The figure \ref{fig4} exhibits that the total system cost is always less in Case 1 compared to Case 2. This outcome suggests that vendor/system designer should adopt the batch-size-dependent service policy as it produces less cost rather than batch-size-independent service policy. It may also be remarked here that total system cost is higher for the multiple vacation compared to single vacation. This behavior is also relevant in the sense that the more availability of the server in case of single vacation produces less system cost.

\section{Conclusion}\lb{CN}
In this paper, we have analyzed a batch service queue with infinite waiting space, Bernoulli arrival process, batch-size-dependent service policy, single and multiple vacation. The generating function approach with double variables is adopted to derive the bivariate probability generating function of queue length and server content distribution at service completion epoch. We have also presented the complete extraction of the probabilities in a simple and elegant way. The
joint distribution at arbitrary slot is also acquired using which several noteworthy marginal distribution and performance measures are reported. The analytic procedure and results are illustrated numerically and graphically through the inclusion of significant distributions which can cope with real-life circumstances. Towards the end it is hoped that the results will be fruitful to the vendors/system designers. The inclusion of the batch Bernoulli arrival process or discrete Markovian arrival process will make the investigation very interesting, and in future, we will keep track for the analysis of those models with single/multiple vacation.
~\\
\bibliographystyle{unsrtnat}
\bibliography{discretevacation}

\end{document}